\documentclass[10pt]{article}
\usepackage{amsmath, amsthm, amssymb, }
\usepackage{float,epsfig}
\usepackage{color}




\textheight 8.9in
 \textwidth 5.6in
\topmargin -0.5in \evensidemargin .2in \oddsidemargin .3in

\begin{document}

\newtheorem{theorem}{\bf Theorem}[section]
\newtheorem{proposition}[theorem]{\bf Proposition}
\newtheorem{definition}[theorem]{\bf Definition}
\newtheorem{corollary}[theorem]{\bf Corollary}
\newtheorem{example}[theorem]{\bf Example}
\newtheorem{exam}[theorem]{\bf Example}
\newtheorem{remark}[theorem]{\bf Remark}
\newtheorem{lemma}[theorem]{\bf Lemma}
\newcommand{\nrm}[1]{|\!|\!| {#1} |\!|\!|}

\newcommand{\calL}{{\mathcal L}}
\newcommand{\calX}{{\mathcal X}}
\newcommand{\calA}{{\mathcal A}}
\newcommand{\calB}{{\mathcal B}}
\newcommand{\calC}{{\mathcal C}}
\newcommand{\calK}{{\mathcal K}}
\newcommand{\C}{{\mathbb C}}
\newcommand{\R}{{\mathbb R}}
\renewcommand{\SS}{{\mathbb S}}
\newcommand{\LL}{{\mathbb L}}
\newcommand{\st}{{\star}}
\def\kernel{\mathop{\rm kernel}\nolimits}
\def\sigan{\mathop{\rm span}\nolimits}

\newcommand{\klasse}{{\boldsymbol \Delta}}

\newcommand{\ba}{\begin{array}}
\newcommand{\ea}{\end{array}}
\newcommand{\von}{\vskip 1ex}
\newcommand{\vone}{\vskip 2ex}
\newcommand{\vtwo}{\vskip 4ex}
\newcommand{\dm}[1]{ {\displaystyle{#1} } }

\newcommand{\be}{\begin{equation}}
\newcommand{\ee}{\end{equation}}
\newcommand{\beano}{\begin{eqnarray*}}
\newcommand{\eeano}{\end{eqnarray*}}
\newcommand{\inp}[2]{\langle {#1} ,\,{#2} \rangle}
\def\bmatrix#1{\left[ \begin{matrix} #1 \end{matrix} \right]}
\def \noin{\noindent}
\newcommand{\evenindex}{\Pi_e}



\def \R{{\mathbb R}}
\def \C{{\mathbb C}}
\def \K{{\mathbb K}}
\def \H{{\mathbb H}}

\def \T{{\mathbb T}}
\def \Pb{\mathrm{P}}
\def \N{{\mathbb N}}
\def \Ib{\mathrm{I}}
\def \Ls{{\Lambda}_{m-1}}
\def \Gb{\mathrm{G}}
\def \Hb{\mathrm{H}}
\def \Lam{{\Lambda}}

\def \Qb{\mathrm{Q}}
\def \Rb{\mathrm{R}}
\def \Mb{\mathrm{M}}
\def \norm{\nrm{\cdot}\equiv \nrm{\cdot}}

\def \P{{\mathbb{P}}_m(\C^{n\times n})}
\def \A{{{\mathbb P}_1(\C^{n\times n})}}
\def \H{{\mathbb H}}
\def \L{{\mathbb L}}
\def \G{{\F_{\tt{H}}}}
\def \S{\mathbb{S}}
\def \s{\mathbb{s}}
\def \sigmin{\sigma_{\min}}
\def \elam{\Lambda_{\epsilon}}
\def \slam{\Lambda^{\S}_{\epsilon}}
\def \Ib{\mathrm{I}}
\def \Tb{\mathrm{T}}
\def \d{{\delta}}

\def \Lb{\mathrm{L}}
\def \N{{\mathbb N}}
\def \Ls{{\Lambda}_{m-1}}
\def \Gb{\mathrm{G}}
\def \Hb{\mathrm{H}}
\def \Delta{\triangle}
\def \Rar{\Rightarrow}
\def \p{{\mathsf{p}(\lam; v)}}

\def \D{{\mathbb D}}

\def \tr{\mathrm{Tr}}
\def \cond{\mathrm{cond}}
\def \lam{\lambda}
\def \sig{\sigma}
\def \sign{\mathrm{sign}}

\def \ep{\epsilon}
\def \diag{\mathrm{diag}}
\def \rev{\mathrm{rev}}
\def \vec{\mathrm{vec}}

\def \ham{\mathsf{Ham}}
\def \herm{\mathsf{Herm}}
\def \sym{\mathsf{sym}}
\def \odd{\mathsf{sym}}
\def \en{\mathrm{even}}
\def \rank{\mathrm{rank}}
\def \pf{{\bf Proof: }}
\def \dist{\mathrm{dist}}
\def \rar{\rightarrow}

\def \rank{\mathrm{rank}}
\def \pf{{\bf Proof: }}
\def \dist{\mathrm{dist}}
\def \Re{\mathsf{Re}}
\def \Im{\mathsf{Im}}
\def \re{\mathsf{re}}
\def \im{\mathsf{im}}

\def \sym{\mathsf{sym}}
\def \sksym{\mathsf{skew\mbox{-}sym}}
\def \odd{\mathrm{odd}}
\def \even{\mathrm{even}}
\def \herm{\mathsf{Herm}}
\def \skherm{\mathsf{skew\mbox{-}Herm}}
\def \str{\mathrm{ Struct}}
\def \eproof{$\blacksquare$}

\def \bS{{\bf S}}
\def \cA{{\cal A}}
\def \E{{\mathcal E}}
\def \X{{\mathcal X}}
\def \F{{\mathcal F}}
\def \cH{\mathcal{H}}
\def \cJ{\mathcal{J}}
\def \tr{\mathrm{Tr}}
\def \range{\mathrm{Range}}
\def \adj{\star}

\def \pal{\mathrm{palindromic}}
\def \palpen{\mathrm{palindromic~~ pencil}}
\def \palpoly{\mathrm{palindromic~~ polynomial}}
\def \odd{\mathrm{odd}}
\def \even{\mathrm{even}}


\title{ Updating structured matrix pencils with no spillover effect on unmeasured spectral data and deflating pair}
\author{ Bibhas Adhikari\thanks{Corresponding author, Department of Mathematics,
IIT Kharagpur, India, E-mail:
bibhas@maths.iitkgp.ac.in } \,  Biswa Nath Datta\thanks{Department of Mathematical Sciences, Northern Illinois University, USA, E-mail:
profbiswa@yahoo.com } \, Tinku Ganai\thanks{Department of Mathematics,
IIT Kharagpur, India, E-mail:
tinkuganaimath@gmail.com }  \, Michael Karow\thanks{Department of Mathematics,
TU Berlin, Germany, E-mail: karow@math.tu-berlin.de. }  }
\date{}

\maketitle
\thispagestyle{empty}

{\small \noin{\bf Abstract.}
This paper is devoted to the study of perturbations of a matrix pencil, structured or unstructured, such that a perturbed pencil will reproduce a given deflating pair while maintaining the invariance of the complementary deflating pair. If the latter is unknown, it is referred to as no spillover updating. The specific structures considered in this paper include  symmetric, Hermitian, $\star$-even, $\star$-odd and $\star$-skew-Hamiltonian/Hamiltonian pencils. This study is motivated by the well-known  Finite Element Model Updating Problem in structural dynamics, where the given deflating pair represents a set of given eigenpairs and the complementary deflating pair represents the remaining larger set of eigenpairs. Analytical expressions of structure preserving no spillover updating are determined for deflating pairs of structured matrix pencils. Besides, parametric representations of all possible unstructured perturbations are obtained when the complementary deflating pair of a given unstructured pencil is known. In addition, parametric expressions are obtained for structured updating with certain desirable structures which relate to existing results on structure preservation of a symmetric positive definite or semi definite matrix pencil. 

 }

\vone \noin{\bf Keywords.} Model updating, structured matrix pencils, inverse eigenvalue problem, deflating subspace

\vone\noin{\bf AMS subject classifications.} 15A22, 65F18, 93B55, 46E30, 47A75


\section{Introduction}\label{sec:1}

The model updating problem (MUP) with no spillover effect on unmeasured spectral data has found its place in the core research areas of numerical linear algebra  due to its importance in real world applications, for example, in vibration industries including automobile, space and aircraft industries \cite{Dattabook,Carvalho2,Mottershedbook,Mottershedsurvey}. The problem is to update a   quadratic matrix polynomial in such a way that a small number of measured eigenvalues and eigenvectors are reproduced by the updated model while maintaining the no spillover of the large number of remaining unmeasured eigenpairs. It is of utmost practical interest that the finite-element inherited structures, such as the symmetry, positive definiteness or semi-definiteness are preserved in the updated model. The quadratic finite element model associated with the MUP is given by 
\be\label{qmodel} M\ddot x(t) + D\dot x(t) + Kx(t)=0\ee 
where $M, D, K$ are square matrices of dimension, say $n\times n,$ $x(t)$ is a column vector of order $n.$ Usually, $M$ is called mass matrix which is Hermitian positive definite, $K$ is Hermitian positive semi-definite and called stiffness matrix, and $D$ is a Hermitian matrix which is called the damping matrix \cite{Carvalho2,KuoDatta,Chu}. The equation (\ref{qmodel}) represents an undamped model if $D$ is the zero matrix. Solutions of (\ref{qmodel}) can be obtained as $x(t)=x_0e^{\lambda_0 t},$ where $(\lambda_0, x_0)$ turns out to be  eigenpairs of the quadratic matrix polynomial $Q(\lam)=\lam^2 M + \lam D + K\in\C^{n\times n}[\lam].$

 Let $\{(\lam_i, x_i) : i=1,\hdots, 2n\}$ be a collection of eigenpairs of $Q(z).$ Then given a positive integer $p\ll 2n$ and a set of scalars $\mu_i,  i=1,\hdots, p,$ the model updating problem is concerned with finding structure preserving quadratic matrix polynomials $\triangle Q(z)=\lam^2 \triangle M + \lam \triangle D  +\triangle K \in\C^{n\times n}[\lam]$ such that \be\label{q1}(Q(\mu_i) + \triangle Q (\mu_i))y_i =0, i=1,\hdots, p\ee for some $y_i\neq 0.$ In addition, if $(\lam_j, x_j), j=p+1,\hdots, 2n$  are not known then it is a no spillover updating. That is, \be\label{q2}(Q(\lam_j) + \triangle Q (\lam_j))x_j =0, j=p+1,\hdots, 2n\ee for such $\triangle Q(z)$ \cite{DattaSarkissian,Elhay}. In the context of applications, equation (\ref{qmodel}) represents a theoretical finite-element model of a structure that needs to be updated by a few measured eigenvalues ($\mu_i, i=1,\hdots, p$) obtained from the real structure without disturbing the unmeasured eigenvalues ($\lam_j, j=p+1,\hdots, 2n$) of the model. Several attempts have been made to solve the problem both by finding analytical and algorithmic solutions \cite{Baruch1,Baruch2,BermanNagy,Carvalho,Wei2,Zimmerman,Brahma,BaiDatta,QianXuBai,CaesarPeter,Carvalho2,ChuChu,ChuLin}. However, a complete characterization of solution sets describing $\triangle Q(z)$ which satisfy  (\ref{q1}) and (\ref{q2}) remains an open problem \cite{KuoDatta}.

We emphasize that a solution of the no spillover quadratic model updating does not necessarily yield a solution of the no spillover linear updating, just be setting the damped matrix to be the null matrix. 
For example: 
\begin{itemize} \item Consider the solution sets proposed in \cite{ChuChu} and \cite{ChuLin} for quadratic models. In  \cite{ChuChu}, $M$ is symmetric positive definite, $D$ is symmetric and $K$ is  symmetric positive definite, and in \cite{ChuLin}, the authors consider a same structure of $Q(\lam)$ but $K$ is semi-definite. Setting $D=0$ in the solutions proposed both in \cite{ChuChu} and \cite{ChuLin}, it can be seen that the perturbation $\Delta D$ is a nonzero matrix. Hence the proposed solutions do not solve the MUP with no spillover for undamped structural models. 



 \item In \cite{KuoDatta}, the authors consider quadratic models $Q(\lam),$ where $M$ is a real symmetric nonsingular matrix, $D$ and $K$ are symmetric matrices. However, it can be easily checked that setting $D=0,$ the proposed solution provides $\Delta D\neq 0.$


 \item In \cite{Lancaster} and \cite{Lancater2}, the author considers the MUP problem with/without spillover for quadratic models where $M$ is symmetric/Hermitian positive deifinite, $D$ and $K$ are symmetric/Hermitian matrices. However the author utilizes the Jordan pair of $Q(\lam)$ in order to redefine the problem in terms of self-adjoint triple, and the coefficient matrices $M, D, K$ are written using the moments of the corresponding system. Due to this formulation, it is not clear how setting $D$ to be the zero matrix will produce structured perturbations of the linear pencil from the solution of quadratic model, unless the Jordan pair satisfies an orthogonality condition. 

   \end{itemize}


Thus it may be concluded that the MUP with/without spillover for quadratic models and undamped models are inherently different if $M$ is a positive definite matrix. In this paper we consider the MUP with no spillover for undamped models $M\ddot x(t) + Kx(t)=0$ represented by structured matrix pencils described as follows.

 For $A \in \C^{m \times n}$ let 
$A^T$ denote its transpose and let $A^*=\bar A^T$ denote its conjugate transpose.
Let $\st\in\{*,T\}$ and $\epsilon_1,\epsilon_2\in \{-1,1\}$.
We say that the pencil $L(\lambda)=\lambda\, M+K\in \C^{n\times n}[\lambda]$ has $(\st,\epsilon_1,\epsilon_2)$-structure if
\begin{equation}
 M^\st=\epsilon_1\, M,\qquad K^\st=\epsilon_2\, K.
 \end{equation}
 Pencils of this form 
are known under the following names.
\begin{center}
\begin{tabular}{l|l}
name & $(\st,\epsilon_1,\epsilon_2)$
\\
\hline
symmetric & $(T,1,1)$\\
Hermitian & $(*,1,1)$\\
$T$-odd & $(T,1,-1)$\\
$*$-odd & $(*,1,-1)$\\
$T$-even & $(T,-1,1)$ \\
$*$-even & $(*,-1,1)$  \\
\end{tabular}
\end{center}

 The set of these pencils is denoted by $\L_n{(\st,\epsilon_1,\epsilon_2)}$. We also consider $\st$-skew-Hamiltonian/Hamiltonian matrix pencils  $L(\lambda)=\lambda\, M+K\in \C^{2n\times 2n}[\lambda]$ which appear in different applications including gyroscopic systems and linear response theory, where $(JM)^\star=- JM,$ $(JK)^\star=JK$ and $J=\bmatrix{0 & I_n\\ -I_n & 0}$ \cite{benner2002numerical}. Thus $JL(\lam)\in \L_{2n}(\st,-1,1).$ These structured matrix pencils arise in a variety of real world problems, see \cite{MMMM,QEP}.

Now, we define MUP with no spillover effect on unmeasured spectral data for pencils $L(\lam)=\lam M + K$  as follows. \\\\
\indent
(\textbf{P1}) {\bf (model updating problem with no spillover)}
Let $(\lam_i^c, x_i^c), i=1,\hdots, p$ be a collection of given eigenpairs of $L(\lam).$ Suppose $(\lam_j^f, x_j^f), j=p+1, \hdots, n$ is a collection of complementary eigenpairs of $L(\lam),$ that is $\{x_1,\hdots, x_n\}$ is nonsingular. Let $\lam_i^a$ and $x_i^a$ be a collection of given scalars and nonzero vectors respectively, $i=1,\hdots,p.$ Then determine perturbations $(\triangle M, \triangle K)$ such that 
$(\lam_i^a,x_i^a)$ become eigenpairs of 
${L}_\triangle (\lam)=\lam (M+\triangle M)+ (K+ \triangle K),$ 
and the corresponding complementary eigenpairs of $L_\triangle(\lam)$ are given by $(\lam_j^f, x_j^f), j=p+1,\hdots, n.$ (The notations $^c, ^f, ^a$ stand for {\it change, fixed} and {\it aimed} respectively.)

Besides, determine $\triangle M, \triangle K$ such that 
 $L_\triangle (\lam) \in\S\subseteq \L_n{(\st,\epsilon_1,\epsilon_2)}$ 
 whenever $L(\lam)\in \S$ and $(\lam_j^f, x_j^f), j=p+1, \hdots, n$ are not known, where $\S$ is a set of structured matrix pencils.\\\\ 

Setting  $\Lambda_a=\diag\{\lam^a_i : i=1,\hdots, p\},$ $X_a=[x_1^a, \, x_2^a, \, \hdots, x_p^a],$ $\Lambda_f=\diag\{\lam_j^f : j=p+1,\hdots, n\},$ and $X_f=[x_{p+1}^f, \, x_{p+2}^f, \, \hdots, x_n^f],$ it follows from Problem (\textbf{P1}) that the desired perturbations $(\Delta M, \Delta K)$ should satisfy
$$(M+\Delta M)X_a\Lambda_a+(K+\Delta K)X_a=0, \qquad 
(M+\Delta M)X_f\Lambda_f+(K+\Delta K)X_f=0.
$$

The matrix pairs $(X,\Lambda)$ with $MX\Lambda+KX=0$ are called deflating pairs
 of $\lambda M+K$ \cite{controlbook}. Here it is not required that $\Lambda$ is to be diagonal.
 However, to avoid redundancies $X,$ should have full column rank.
 Two deflating pairs $(X_1,\Lambda_1)$, $(X_2,\Lambda_2)$ are said to be 
 complementary if $\bmatrix{X_1 & X_2}$  is a nonsingular square matrix.
With this terminology the following extended problem can be formulated. \\\\

({\bf P2}) {\bf (change of deflating pairs with no spillover)} Let  $(X_c,\Lambda_c)\in \C^{n\times p} \times \C^{p\times p}  $ and 
$(X_f,\Lambda_f)\in \C^{n\times (n-p)} \times \C^{(n-p)\times (n-p)} $ be 
complementary deflating pairs of a marix pencil $L(\lambda)=\lambda\, M+K.$
Let $(X_a,\Lambda_a)$ be a matrix pair of the same dimension 
as  $(X_c,\Lambda_c)$ such that $\bmatrix{X_a & X_f}$ is nonsingular.
 Find perturbations $(\Delta M,\Delta K)$
 such that $(X_a,\Lambda_a)$ and $(X_f,\Lambda_f)$
 are complementary deflating pairs of the perturbed pencil
$L_\Delta(\lambda)=(M+\Delta M)\, \lambda+ (K+\Delta K).$ 

Moreover, determine pair of structured perturbations $(\triangle M, \triangle K)$ such that 
 $L_\triangle (\lam) \in \S\subseteq \L_n(\st, \epsilon_1, \epsilon_2)$ 
 whenever $L(\lam)\in \S$ and $(X_f,\Lambda_f)$ is not known, where $\S$ is a set of structured matrix pencils
(Note that $\Lam_c, \Lam_a, \Lam_f$ need not be diagonal matrices).\\\\

Let us call the complementary deflating pairs $(X_c,\Lam_c)$ and $(X_f, \Lam_f)$ of a pencil $L(\lam)\in\C^{n\times n}[\lam]$ as {\it change} and {\it fixed} deflating pairs respectively. Then it follows that the Problem {\bf(P1)} is a special case of Problem {\bf(P2)}. 


Problem ({\bf P1}) for Hermitian pencils defines the standard MUP with no spillover for an undamped model by setting $\lambda=z^2$. It is extensively studied in literature. See \cite{DaiMKupdate,YuanKsparse,Wei,Li,SarmadiKE,Xie} and the references therein. However, explicit parametric expressions of $\triangle M, \triangle K$ are obtained only in a few articles when both the coefficient matrices of $L(\lam)$ are positive definite or semi-definite. For example:
\begin{itemize}
\item[$\blacksquare$] In \cite{Carvalho2}, Carvalho et al. have derived solutions of problem {\bf (P1)} which are of the form $\triangle M=0, \triangle K=-MX_c\Psi X_c^TM$ for an undamped model $L(z^2)=z^2 M+K\in \R^{n\times n}$ where both $M$ and $K$ are symmetric positive definite, and $\{\lam_1^c, \hdots, \lam_p^c\}\cap \{\lam_{p+1}^f, \hdots, \lam_{2n}^f\}=\emptyset$. 
Here $\Psi$ is a (symmetric) solution of a (matrix) linear system, which has to obtained by solving the system numerically.

\item[$\blacksquare$] Solvability conditions and explicit expressions for solution pairs $(\triangle M, \triangle K)$ are obtained by Mao et al. in \cite{DaiMKupdate} for $L(\lam)=\lam M - K\in\R^{n\times n},$ where $M$ positive definite and $K$ is positive semi-definite. 


\end{itemize}

Analytical expressions of the updating matrices are also obtained for undamped models in \cite{YuanZuoChen} and \cite{YuanDai} by treating the MUP as a residual minimization problem and matrix pencil nearness problem respectively. An optimization approach is also considered in \cite{BermanNagy} to obtain the updates. Determination of explicit expressions for updating matrices is motivated by the fact that it gives more suitable results than the same obtained by using iterative methods \cite{YangChen}. Particular classes of solutions are also obtained for specific structural undamped models \cite{YuanMKupdate,FindMKblock}. To the best of the knowledge of the authors, no explicit solution sets are available in literature for the undamped model when the corresponding matrix pencils are not Hermitian.

The contribution of this work are as follows. Let $L(\lam)=\lam M + K.$
\begin{enumerate}
\item First, a general expression is obtained for all possible unstructured perturbations which solves the Problem ({\bf P2}) when the fixed (unmeasured) deflating pair of the corresponding pencil is known. 


\item Next, parametric expressions are determined for structure preserving perturbations which solve the Problem ({\bf P2})  when $ L(\lam)\in \L_n{(\st,\epsilon_1,\epsilon_2)}.$ In this case, the fixed (unmeasured) deflating pair of $L(\lam)$ is unknown, and $\sigma(\Lam_c) \cap \sigma(\ep_1\ep_2\Lam_f^\st)=\emptyset.$  

\item Finally, parametric solutions of the Problem ({\bf P2}) are obtained for especially structured pencils $L(\lam)\in \S \subset \L_n{(\st,\epsilon_1,\epsilon_2)}.$ The pencils $L(\lam)\in \S$ have the following structures: Hermitian pencils with $M$ positive definite, $\st$-odd pencils with $M$ positive definite; $\st$-even pencils with $K$ positive definite; and $\st$-skew-Hamiltonian/Hamiltonian matrix pencils $L(\lam),$ that is,  $JL(\lam)\in \L_{2n}(\st,-1,1).$ 

Moreover, parametric solution sets for the Problem ({\bf P1}) are obtained by utilizing the solutions of the Problem ({\bf P2}) when $L(\lam)\in \S$. It is also shown that the proposed solution realizes the solution obtained by Carvalho et al. in \cite{Carvalho2} as a special case (see Remark \ref{remark:carvelo}). Besides, the proposed solution also identifies the solution proposed by  Mao et al. in \cite{DaiMKupdate} (see Remark \ref{HermMK}). It is also to be noted in this context that our results can not be obtained as special cases of the existing structured preserving results of the quadratic FEM updating just by setting the damping matrix to be the null matrix.

The obtained results are supported with numerical examples.
\end{enumerate}

The paper is organized as follows. In the next two sections
we present elementary facts on deflating pairs and pencils with $(\st,\epsilon_1,\epsilon_2)$-structure. Though all these fact are known we give
some proofs for the convenience of the reader.
In Section \ref{sec:unstr} we discuss Problem ({\bf P2}) for unstructured perturbations.
We give a general solution formula provided for the case that $(X_f,\Lambda_f)$ is completely known.
The latter rarely happens in practical applications. However, for pencils with
$(\st,\epsilon_1,\epsilon_2)$-structure the complete knowledge of $(X_f,\Lambda_f)$
is not required for solving the problem. Instead, only
a certain spectral condition is needed.  This is the content of Section \ref{sec:str}
in which we present our main result. 
In the remaining sections we discuss special cases and show numerical examples.

{\bf Notation.} As usual, $\R$ and $\C$ denote the field of real and complex numbers respectively. $A \geq 0$ denotes that $A$ is a Hermitian positive semi-definite matrix, whereas $A>0$ denotes that $A$ is Hermitian positive definite. 
$\|X\|_F$ denotes Frobenius norm of a matrix $X$. $\C^{n\times n}[\lam]$ denotes the space of one parameter ($\lam$) matrix polynomials whose coefficients are complex matrices of order $n\times n.$ 
By $\sigma(\Lambda)$ we denote the spectrum (that is the multiset of eigenvalues) of 
$\Lambda$.  $\re(x)$ and $\im(x)$ denote the real and imaginary parts of a vector or scalar $x.$ Finally, $I_k$ denotes the identity matrix of order $k\times k.$

\section{Eigenpairs and deflating pairs}
A pencil $L(\lambda)=\lambda \, M+K\in \C^{n \times n}[\lambda]$ is said to  be regular if its characteristic polynomial $\chi(\lambda)={\rm det}(\lambda M+K)$
is not zero polynomial. In this paper we consider only regular pencils. 
The zeros of $\chi$ are called the finite eigenvalues of $L(\lambda)$. 
The pencil is said to have eigenvalue infinity if $M$ is singular.  
Let $\lambda_0\in \C$ be a finite eigenvalue. Then there exists a nonzero
eigenvector $x \in \C^n$ such that $\lambda_0Mx+Kx=0$. The pair $(\lambda_0,x)$ is called an eigenpair of $L(\lambda)$.
Recall from  the introduction that a matrix pair $(X,\Lambda)\in \C^{n \times p}\times \C^{p \times p}$ with 
${\rm rank}\, X=p\leq n$.
is said to be a deflating pair for the pencil $L(\lambda)$ if 
\begin{equation}
MX\Lambda+KX=0.
\end{equation}
The latter is equivalent to the equation
$ L(\lambda)X=MX(\lambda\, I-\Lambda).$
The range of $X$ is then called a deflating subspace.
If $p=1$ then $(\Lambda,X)$ is an  eigenpair of $L(\lambda)$. 
In general the eigenvalues of the square matrix $\Lambda$ form a subset of the set of  eigenvalues of $L(\lambda)$. More precisely, if $\xi$ is an eigenvector of $\Lambda$ to the eigenvalue $\lambda_0 \in \C$  (that is $\Lambda \xi=\lambda_0\xi$) then $(\lambda_0,X \xi)$ is an eigenpair of $L(\lambda)$.
 In particular, if $\Lambda$ is diagonal then the columns of $X$ are eigenvectors of $L(\lambda)$.
Furthermore, for any $\xi_0 \in \C^p$ the function
$x(t)=X\, e^{\Lambda t}\xi_0$ fulfills the differential equation
$M\, \dot x(t)+K\, x(t)=0$.
We say that two deflating pairs $(X,\Lambda)$, $(\hat X,\hat \Lambda)$ of $L(\lambda)$ 
are complementary if $\bmatrix{X,\hat X}$ is a nonsingular square matrix.
In this case  $(\bmatrix{X,\hat X},{\rm diag}(\Lambda, \hat \Lambda))$ is a deflating pair
and
$$ L(\lambda)=M\,\bmatrix{X &\hat X}
\left(\lambda\, I-{\rm diag}(\Lambda, \hat \Lambda)
\right)\bmatrix{X & \hat X}^{-1}.$$
If $(X,\Lambda)$ is a deflating pair then $(XZ,Z^{-1}\Lambda Z)$ is also a deflating pair for any nonsingular matrix $Z \in \C^{p\times p}$. The associated 
deflating subspaces coincide. A simple application 
of this fact is as follows. Suppose $M$ and $K$ are real matrices and
$(\lambda,x)$ is an eigenpair with nonreal $\lambda$. Then the conjugate pair 
$(\bar \lambda, \bar x)$ is also an eigenpair. Suppose that $M$ is nonsingular.
Then $\lambda \not = \bar \lambda$ implies that the vectors $x,\bar x$ 
are linearly independent and hence, the matrices 
$\Lambda={\rm diag}(\lambda, \bar\lambda),X=\bmatrix{x & \bar x}$ form a deflating pair. A real deflating pair $(X_r,\Lambda_r)$ with ${\rm range}\, X_r= {\rm range}\, X$ is
$$ X_r=XZ=\bmatrix{\re{(x)} & \im{(x)}},\quad
\Lambda_r=Z^{-1}\Lambda Z=\bmatrix{\re{(\lambda)} & \im{(\lambda)}\\ -\im{(\lambda)} & \re{(\lambda)}}, \quad
 \text{where}\quad
Z=\frac{1}{2}\bmatrix{ 1 & -i\\ 1 &i}.
$$
\section{Structured pencils}
Let $\st\in\{*,T\}$ and $\epsilon_1,\epsilon_2\in \{-1,1\}$.
Recall from the introduction that $L(\lambda)=\lambda\, M+K\in \C^{n,n}[\lambda]$ 
is said to have $(\st,\epsilon_1,\epsilon_2)$-structure if
\begin{equation}\label{MKsym} 
 M^\st=\epsilon_1\, M,\qquad K^\st=\epsilon_2\, K.
 \end{equation}
The set of these pencils is denoted by 
$\L_n{(\st,\epsilon_1,\epsilon_2)}$. The number $x_1^\st Mx_2\in \C$ is called the $M$-scalar product of the vectors $x_1,x_2$. For $z\in \C$ we define $z^\st=\bar z$ (the conjugate of $z$) if $\st=*$ and  $z^\st=z$ 
if $\st=T$. 
Then we have $x_2^\st Mx_1=\epsilon_1(x_1^\st Mx_2)^\st$. 
This yields 
$$x^\st Mx\begin{cases} \in \R &\text{if }(\st, \epsilon_1)=(*,1),\\
 \in i\R &\text{if }(\st, \epsilon_1)=(*,-1),\\
 =0&\text{if }(\st, \epsilon_1)=(T,-1).
\end{cases}
$$
In the first of these cases ($M$ Hermitian) the matrix $M$ is said to be positive definite if $x^*Mx>0$ for all $x \not =0$. 
If $x_1^\st Mx_2=0$ 
then the vectors $x_1,x_2$ are said to be $M$-orthogonal. For a matrix $X\in \C^{n \times p}$ with columns $x_i$ the associated $M$-Gramian is $G=X^\st MX=[x_i^\st Mx_j]\in \C^{p\times p}$. Obvoiusly, $G^\st=\epsilon_1 G$.

The proposition below lists elementary properties of pencils with $(\st,\epsilon_1,\epsilon_2)$-structure.

\begin{proposition}\label{orthprops}
Let 
$L(\lambda)=\lambda\, M+K\in\L_n{(\st,\epsilon_1,\epsilon_2)}$. Then
\begin{itemize}
\item[(i)] $\lambda_0\in \C$ is an eigenvalue of $L(\lambda)$
if and only if $\epsilon_1\epsilon_2\lambda_0^\st$ is an eigenvalue of $L(\lambda)$.
\end{itemize}
Let $X_j\in \C^{n,p_j}$, let
$G_{jk}=X_j^\st M X_k$ and $F_{jk}=X_j^\st K X_k$ for $j,k\in \{1,2\}$ . Then 
\begin{itemize}
\item[(ii)] the pencil
$
[X_1,X_2]^\st L(\lambda)[X_1,X_2]
=
\lambda\,
\bmatrix{ G_{11} & G_{12}\\ G_{21} & G_{22} }
+
\bmatrix{F_{11} & F_{12}\\F_{21} & F_{22}}
$
has $(\st,\epsilon_1,\epsilon_2)$-structure.
In particular,
$G_{jk}^\st=\epsilon_{1}G_{kj}$, 
$F_{jk}^\st=\epsilon_{2}F_{kj}$ and
$\lambda\,G_{jj}+F_{ jj}\in \L_{p_j}{(\st,\epsilon_1,\epsilon_2)}$.
\end{itemize}
Suppose $(X_j,\Lambda_j)$, $j=1,2$ are deflating pairs of $L(\lambda)$. Then for $j,k\in \{1,2\}$,
\begin{itemize}
\item[(iii)] 
$G_{jk}\Lambda_k=-F_{jk}
=\epsilon_1\epsilon_2\,\Lambda_j^* G_{jk }$,
\item[(iv)] the spectral property $\sigma(\Lambda_k)\cap 
\sigma(\epsilon_1\epsilon_2\Lambda_j^\st)=\emptyset$ implies
$G_{jk}=F_{jk}=0$,
\item[(v)] if $\sigma(\Lambda_1)\cap 
\sigma(\epsilon_1\epsilon_2\Lambda_2^\st)=\emptyset$ then
$$
[X_1,X_2]^\st L(\lambda)[X_1,X_2]=
{\rm diag}(\lambda\, G_{11}-G_{11}\Lambda_1,
\lambda\, G_{22}-G_{22}\Lambda_2 ).
$$
In particular $G_{11}$ and $G_{22}$ are both nonsingular if
$(X_1,\Lambda_1)$ and $(X_2,\Lambda_2)$ are complementary and 
$M$ or $K$ is nonsingular.
\end{itemize}
\end{proposition}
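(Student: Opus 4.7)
The plan is to work through (i)--(v) in order, since each part feeds the next. For (i), I would start with the characteristic polynomial $\chi(\lambda)=\det(\lambda M+K)$ and use the structural identities. Applying $\star$ to $\chi(\lambda_0)=0$ gives $\det(\lambda_0^\st M^\st+K^\st)=\det(\epsilon_1\lambda_0^\st M+\epsilon_2 K)=\epsilon_2^n\det(\epsilon_1\epsilon_2\lambda_0^\st M+K)=0$, so $\epsilon_1\epsilon_2\lambda_0^\st$ is also a root of $\chi$; the converse follows by symmetry since $(\epsilon_1\epsilon_2)^2=1$ and $(\lambda^\st)^\st=\lambda$.

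Part (ii) is essentially a bookkeeping check. I would just form the congruence $[X_1,X_2]^\st L(\lambda)[X_1,X_2]$ and apply $\star$ to each block: the identities $G_{jk}^\st=\epsilon_1 G_{kj}$ and $F_{jk}^\st=\epsilon_2 F_{kj}$ drop out immediately from $M^\st=\epsilon_1 M$ and $K^\st=\epsilon_2 K$, and the diagonal blocks $\lambda G_{jj}+F_{jj}$ inherit the structure as the special case $j=k$.

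For (iii), I would multiply the defining relation $MX_k\Lambda_k+KX_k=0$ on the left by $X_j^\st$ to get $G_{jk}\Lambda_k+F_{jk}=0$, which is the first equality. To reach $F_{jk}=\epsilon_1\epsilon_2\Lambda_j^\st G_{jk}$ (up to sign), I would apply $\star$ to $MX_j\Lambda_j+KX_j=0$, obtaining $\epsilon_1\Lambda_j^\st X_j^\st M+\epsilon_2 X_j^\st K=0$, then multiply on the right by $X_k$; rearranging with $\epsilon_i^2=1$ gives the claim. Part (iv) is then an immediate Sylvester-type argument: subtracting the two forms of $F_{jk}$ obtained in (iii) yields $G_{jk}\Lambda_k-\epsilon_1\epsilon_2\Lambda_j^\st G_{jk}=0$, a homogeneous Sylvester equation whose only solution under the disjoint-spectra assumption is $G_{jk}=0$, and then $F_{jk}=-G_{jk}\Lambda_k=0$.

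For (v), the spectral hypothesis is symmetric in the indices (since $(\epsilon_1\epsilon_2)^2=1$ and $\star$ is an involution on $\C$), so (iv) forces $G_{12}=G_{21}=F_{12}=F_{21}=0$; substituting into the formula in (ii) and using $F_{jj}=-G_{jj}\Lambda_j$ from (iii) yields the block-diagonal expression. For the nonsingularity claim I would take determinants of the congruence: if $M$ is nonsingular then $\det L(\lambda)$ has full degree $n$ in $\lambda$, and if $K$ is nonsingular then $\det L(0)\neq 0$; in either case the product $\det(\lambda G_{11}-G_{11}\Lambda_1)\det(\lambda G_{22}-G_{22}\Lambda_2)$ cannot collapse to a polynomial of lower degree, which forces $G_{11}$ and $G_{22}$ to be nonsingular. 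The only mildly subtle step is (iv)---recognizing the disguised Sylvester equation and the symmetry of the hypothesis---but once that is in hand everything else is direct manipulation.
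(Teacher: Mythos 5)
Your proposal is correct and follows essentially the same route as the paper: determinant/singularity of $\epsilon_2(\lambda_0 M+K)^\st$ for (i), direct verification for (ii), left-multiplying the deflating relation and using the $\st$-symmetry for (iii), the homogeneous Sylvester equation for (iv), and combining (ii) with (iv) for (v). You in fact supply two details the paper leaves implicit --- the index-symmetry of the spectral hypothesis in (v) and the degree/evaluation-at-zero argument for the nonsingularity of $G_{11},G_{22}$ --- and both are sound.
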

\proof
The matrix $\lambda_0\, M+K$ is singular if an only if
the matrix $\epsilon_1\epsilon_2 \lambda_0^\st M^\st+K^\star=
\epsilon_2(\lambda_0\, M+K)^\st$ is singular. Thus, $(i)$ holds.
$(ii)$ is immediate from (\ref{MKsym}). Multiplying the
 relation $MX_k\Lambda_k+KX_k=0$ from the left with
 $X_j^\st$ yields the first identity of $(iii)$. The second identity then follows from $(ii)$. Reordering terms in $(iii)$ we get the Sylvester equation
 $G_{jk}\Lambda_k-\epsilon_1\epsilon_2\,\Lambda_j^* G_{jk }=0.$
 By an elementary result on Sylvester equations we have $G_{jk}=0$ 
if the matrices $\Lambda_k$ and 
$\epsilon_1\epsilon_2\,\Lambda_j^*$ have disjoint spectra. Hence, $(iv)$. $(v)$ is immediate from $(ii)$ and $(iv)$.
\eproof 
\\\\
The matrices $X_1$ and $X_2$ in Proposition \ref{orthprops} may be identical. In this 
case we obtain from statement $(iv)$ the following
corollary.
\begin{corollary}
Let $(X,\Lambda)$ be a deflating pair of 
$\lambda\, M+K\in\L_n{(\st,\epsilon_1,\epsilon_2)}$ such that
$\sigma(\Lambda)\cap 
\sigma(\epsilon_1\epsilon_2\Lambda^\st)=\emptyset$. Then 
$X^\st MX=X^\st KX=0$.
\end{corollary}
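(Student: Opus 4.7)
The plan is to derive the corollary as a direct specialization of Proposition \ref{orthprops} (iv). I would take $X_1 = X_2 = X$ and $\Lambda_1 = \Lambda_2 = \Lambda$ in the setup preceding statement (iv), so that $(X_1, \Lambda_1)$ and $(X_2, \Lambda_2)$ are both (the same) deflating pair of $L(\lambda)$. With this choice the Gramian and its $K$-analogue reduce to $G_{11} = G_{12} = G_{21} = G_{22} = X^\st M X$ and $F_{11} = F_{12} = F_{21} = F_{22} = X^\st K X$.

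Next I would verify that the hypothesis of (iv) is satisfied: the assumption $\sigma(\Lambda) \cap \sigma(\epsilon_1 \epsilon_2 \Lambda^\st) = \emptyset$ is exactly the spectral disjointness condition $\sigma(\Lambda_k) \cap \sigma(\epsilon_1 \epsilon_2 \Lambda_j^\st) = \emptyset$ required in (iv) (for $j = k = 1$, or equivalently $j = k = 2$). Invoking (iv) then yields $G_{11} = 0$ and $F_{11} = 0$, which is the desired conclusion $X^\st M X = X^\st K X = 0$.

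There is essentially no obstacle, since the corollary is just the diagonal case of Proposition \ref{orthprops} (iv); the only thing to make explicit is the translation of notation between the two statements. In writing it up I would keep the proof to one or two sentences, perhaps noting in passing that the underlying mechanism is the Sylvester equation $G\Lambda - \epsilon_1\epsilon_2 \Lambda^\st G = 0$ with $G = X^\st M X$, whose only solution under disjoint spectra is $G = 0$, and then $F = X^\st K X = -G\Lambda = 0$ follows from part (iii).
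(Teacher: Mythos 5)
Your proposal is correct and is exactly the paper's own argument: the paper introduces this corollary with the remark that the matrices $X_1$ and $X_2$ in Proposition \ref{orthprops} may be identical, and then reads off the conclusion from statement $(iv)$. Your added note about the underlying Sylvester-equation mechanism is also consistent with how the paper proves $(iv)$ itself.
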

A further corollary of Proposition \ref{orthprops} is obtained
if $X_1,X_2$ are chosen to be column vectors.
\begin{corollary}
Let $(\lambda_1,x_1)$ and $(\lambda_2,x_2)$ be eigenpairs of
 $\lambda\, M+K\in\L_n{(\st,\epsilon_1,\epsilon_2)}$.
If $\lambda_2 \not=\epsilon_1\epsilon_2\, \lambda_1^*$ then $x_1^*Mx_2=x_1^*Kx_2=0$.
\end{corollary}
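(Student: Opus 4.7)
The plan is to derive this corollary as a direct specialization of Proposition \ref{orthprops}(iv). An eigenpair $(\lambda_j,x_j)$ is nothing but a deflating pair $(X_j,\Lambda_j)$ of size $p_j=1$, with $X_j=x_j\in\C^{n\times 1}$ and the $1\times 1$ ``matrix'' $\Lambda_j=[\lambda_j]$. In this scalar case the spectral condition $\sigma(\Lambda_2)\cap\sigma(\epsilon_1\epsilon_2\Lambda_1^\st)=\emptyset$ required by Proposition \ref{orthprops}(iv) collapses to the single inequality $\lambda_2\neq \epsilon_1\epsilon_2\lambda_1^\st$, which is precisely the hypothesis of the corollary (modulo the $\st$ versus $*$ convention adopted in the statement).

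I would then simply invoke Proposition \ref{orthprops}(iv) with the above choice of $(X_1,\Lambda_1)$ and $(X_2,\Lambda_2)$. That result asserts $G_{12}=X_1^\st M X_2=0$ and $F_{12}=X_1^\st K X_2=0$, which for column vectors reads $x_1^\st M x_2=0$ and $x_1^\st K x_2=0$. Thus the conclusion follows with no additional computation.

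For completeness I would also remark how the key mechanism of Proposition \ref{orthprops}(iv) operates in this scalar situation, so the reader can read the corollary in isolation: the identity $(iii)$ of the proposition, applied to the two eigenpairs, reduces to the scalar Sylvester-type relation $(\lambda_2-\epsilon_1\epsilon_2\lambda_1^\st)\,x_1^\st M x_2=0$, from which $x_1^\st M x_2=0$ is immediate under the disjointness assumption, and then $x_1^\st K x_2=-\lambda_2\,x_1^\st M x_2=0$ as well.

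There is no real obstacle here; the only thing to watch is that the corollary as typeset uses $x_1^*$ and $\lambda_1^*$ rather than $x_1^\st$ and $\lambda_1^\st$, which looks like a minor typographical inconsistency with the surrounding setup, but does not affect the argument since the specialization of Proposition \ref{orthprops} goes through verbatim in either convention.
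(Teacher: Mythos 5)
Your proof is correct and matches the paper's own route exactly: the corollary is stated there as an immediate specialization of Proposition \ref{orthprops}(iv) to column vectors, which is precisely your argument, and your unwinding of the scalar Sylvester relation mirrors the proposition's proof. Your observation about the $\st$ versus $*$ typographical inconsistency in the statement is also well taken and does not affect the result.
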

If $(\lambda_0,x)$ is an eigenpair of $\lambda\, M+K$ then by multiplying the
relation $(\lambda_0M+K)x=0$ from the left with $x^\st$ we get
\begin{equation}\label{eq:rayleighquotient}
\lambda_0=-x^\st Kx/x^\st Mx
\end{equation}
 provided that  $x^\st Mx\not=0$. The latter trivialy holds
if $\st=*$ and $M$ is Hermitian and positive definite. However, by the corollary
above we have $x^\st Mx=0$ whenever $\lambda_0\not = \epsilon_1\epsilon_2\lambda_0^\st$.
In this case we have the following statement which is immediate from the previous results in
this section.
\begin{corollary}
Let $(\lambda_0,x)$ be an eigenpair of $\lambda\, M+K\in\L_n{(\st,\epsilon_1,\epsilon_2)}$
such that $\lambda_0\not = \epsilon_1\epsilon_2\lambda_0^\st$.
By part $(i)$ of Proposition \ref{orthprops} there exists an eigenpair
$(\epsilon_1\epsilon_2\lambda_0^\st,\hat x)$. Set $X:=\bmatrix{x & \hat x}$, 
$g:=\hat x^\st Mx$. Then $(X,{\rm diag}( \lambda_0, \epsilon_1\epsilon_2\lambda_0^\st))$ is a
deflating pair of $L(\lambda)$, and
$$
X^*MX=\bmatrix{0 & \epsilon_1\, g^\st \\ g & 0}, \qquad
X^*KX=\bmatrix{0 & -\epsilon_2\,\lambda_0^\st\, g^\st \\ -\lambda_0\, g & 0}.
$$ 
By scaling of $x$ one can achieve that $g=1$ or $g=0$.
\end{corollary}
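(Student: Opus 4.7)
The plan is to compute the four entries of the $2\times 2$ block matrices $X^{\st}MX$ and $X^{\st}KX$ and to identify them with the claimed expressions, exploiting the corollary immediately preceding the statement together with parts (ii) and (iii) of Proposition \ref{orthprops}. First, the fact that $(X,\diag(\lam_0,\ep_1\ep_2\lam_0^{\st}))$ is a deflating pair is immediate: each column of $X$ is an eigenvector for the corresponding diagonal entry, so $MX\Lam+KX=0$ holds column by column.

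For the Gramian $X^{\st}MX$, I would evaluate its four entries separately. The $(2,1)$ entry is $\hat x^{\st}Mx=g$ by definition. The two diagonal entries vanish by the penultimate corollary: since $\lam_0\neq \ep_1\ep_2\lam_0^{\st}$, taking $x_1=x_2=x$ gives $x^{\st}Mx=0$, and taking $x_1=x_2=\hat x$ with eigenvalue $\ep_1\ep_2\lam_0^{\st}$ gives $\hat x^{\st}M\hat x=0$, since the required spectral condition $\ep_1\ep_2\lam_0^{\st}\neq \ep_1\ep_2(\ep_1\ep_2\lam_0^{\st})^{\st}$ simplifies (using that $\ep_1\ep_2\in\{-1,1\}$ is real and $(\lam_0^{\st})^{\st}=\lam_0$) to the standing hypothesis $\ep_1\ep_2\lam_0^{\st}\neq \lam_0$. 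Finally, the $(1,2)$ entry $x^{\st}M\hat x=\ep_1 g^{\st}$ is forced by the $(\st,\ep_1)$-symmetry $(X^{\st}MX)^{\st}=\ep_1 X^{\st}MX$ provided by Proposition \ref{orthprops}(ii).

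For $X^{\st}KX$, I would invoke Proposition \ref{orthprops}(iii) with the single deflating pair $(X,\Lam)$, where $\Lam=\diag(\lam_0,\ep_1\ep_2\lam_0^{\st})$, to obtain $X^{\st}KX=-(X^{\st}MX)\Lam$. A direct $2\times 2$ multiplication of the antidiagonal matrix just computed by $\Lam$ produces precisely the claimed off-diagonal entries $-\ep_2\lam_0^{\st}g^{\st}$ and $-\lam_0 g$.

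The normalization is routine: replacing $x$ by $\alpha x$ with $\alpha\in\C\setminus\{0\}$ scales $g$ to $\alpha g$, so the choice $\alpha=1/g$ achieves $g=1$ when $g\neq 0$, while $g=0$ is invariant under rescaling. I do not foresee any obstacle here, since the corollary is essentially a bookkeeping assembly of results already established in the section; the only subtlety worth highlighting explicitly is the verification that both diagonal entries of $X^{\st}MX$ vanish, which relies on the algebraic identity $(\ep_1\ep_2)^{\st}(\lam_0^{\st})^{\st}=\lam_0$.
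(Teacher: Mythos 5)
Your proposal is correct and follows exactly the route the paper intends: the paper gives no explicit proof, declaring the corollary ``immediate from the previous results in this section,'' and your verification via the preceding corollary (for the vanishing diagonal entries, including the check that $\epsilon_1\epsilon_2(\epsilon_1\epsilon_2\lambda_0^\st)^\st=\lambda_0$) together with parts (ii) and (iii) of Proposition \ref{orthprops} is precisely the intended assembly. The computations of $X^\st MX$, of $X^\st KX=-(X^\st MX)\Lambda$, and of the rescaling all check out.
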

The identity (\ref{eq:rayleighquotient}) yields the following basic fact.
\begin{proposition}
 Let $M$ be Hermitian and positive definite.
Then all eigenvalues of $\lambda M+K$ are real if $K$ 
is Hermitian. They are all negative if $K$ is Hermitian and positive definite.
The eigenvalues are all purely imaginary or $0$ if $K$ is skew-Hermitian.
\end{proposition}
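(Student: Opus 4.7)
The plan is to read everything off the Rayleigh quotient identity (\ref{eq:rayleighquotient}). Since $M$ is Hermitian positive definite, $M$ is invertible, so the pencil $\lambda M+K$ is regular and has no infinite eigenvalue: every eigenvalue $\lambda_0\in\C$ is finite and comes with a genuine eigenpair $(\lambda_0,x)$, $x\neq 0$. Positive definiteness further gives $x^\st Mx>0$, so the denominator in (\ref{eq:rayleighquotient}) is a strictly positive real number for every eigenvector $x$. This reduces each of the three claims to a statement about $x^\st Kx$.

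First, if $K$ is Hermitian, then $x^\st Kx\in\R$, hence $\lambda_0=-x^\st Kx/x^\st Mx\in\R$. Second, if in addition $K$ is positive definite, then $x^\st Kx>0$, so $\lambda_0<0$. Third, if $K$ is skew-Hermitian, then $(x^\st Kx)^\st=x^\st K^\st x=-x^\st Kx$, so $x^\st Kx\in i\R$, and dividing by the positive real $x^\st Mx$ keeps $\lambda_0$ in $i\R$ (which includes $0$).

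There is no real obstacle here: the only subtlety is ruling out infinite eigenvalues so that every eigenvalue is accessible via an eigenvector and the Rayleigh quotient formula applies, which is handled by $M\succ 0\Rightarrow M$ invertible. Everything else is a one-line sign/reality check on $x^\st Kx$.
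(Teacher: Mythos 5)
Your argument is correct and is exactly the one the paper intends: the proposition is stated immediately after the Rayleigh quotient identity (\ref{eq:rayleighquotient}) with the remark that this identity ``yields the following basic fact,'' and your case analysis of the sign/reality of $x^* Kx$ over the positive real denominator $x^* Mx$ is precisely that argument spelled out. Your extra observation that $M\succ 0$ rules out infinite eigenvalues so every eigenvalue admits an eigenvector is a sensible bit of care that the paper leaves implicit.
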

It is a well known fact that to a Hermitian pencil with positive definite $M$ 
there exists a basis $\{x_i,\,i=1, \ldots,n\}$ of eigenvectors such that $x_i^\st Mx_j=0$ for $i\not=j$.
The general eigenstructure of pencils with $(\st,\epsilon_1,\epsilon_2)$-symmetry is 
somehow involved and will not be discussed here. We refer to the literature
\cite{adhikari2009structured,GohLaRo,Thompson,lin1999canonical}.
The next proposition shows how to constuct a complementary
deflating pair to a given one.
\begin{proposition}
Let $(X_1,\Lambda_1)\in  \C^{n\times p} \times \C^{p\times p}$ be a deflating pair of 
$\lambda\, M+K\in\L_n{(\st,\epsilon_1,\epsilon_2)}$.
Suppose that $M$ and $G_1:=X_1^\st MX_1$ are both nonsingular.
Let $X\in \C^{n\times (n-p)}$ be such that $\bmatrix{X_1 &X}$ is nonsingular. Set
$ X_2:=X-X_1G_1^{-1}(X_1^\st MX).$ Then 
\begin{itemize}
\item[(i)]
$X_1^\st MX_2=X_1^\st KX_2=0$ and $G_2:=X_2^\st MX_2$ is nonsingular.
\item[(ii)] Set $\Lambda_2:=-G_2^{-1}(X_2^\st KX_2)$. 
Then $(X_2,\Lambda_2)$ is a deflating pair of 
$L(\lambda)$ which is complementary to $(X_1,\Lambda_1)$.
\end{itemize}
\end{proposition}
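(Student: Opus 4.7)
My plan is to establish (i) by direct computation and then deduce (ii) by premultiplying the candidate deflating relation with the nonsingular matrix $[X_1,X_2]^\st$.

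For (i), the $M$-orthogonality $X_1^\st M X_2=0$ follows by substituting the definition of $X_2$:
\[
X_1^\st M X_2 = X_1^\st M X - (X_1^\st M X_1)\, G_1^{-1}(X_1^\st M X) = G_1 G_1^{-1}(X_1^\st MX) - G_1 G_1^{-1}(X_1^\st M X) = 0.
\]
To obtain $X_1^\st K X_2 = 0$, I would take $\st$ of the deflating relation $K X_1 = -M X_1 \Lambda_1$ and use the structure identities $M^\st=\epsilon_1 M$, $K^\st=\epsilon_2 K$ to get
\[
X_1^\st K = -\epsilon_1\epsilon_2\,\Lambda_1^\st X_1^\st M,
\]
whence $X_1^\st K X_2 = -\epsilon_1\epsilon_2\Lambda_1^\st (X_1^\st M X_2) = 0$. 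The nonsingularity of $G_2$ then follows by noting that
\[
[X_1,X_2] = [X_1,X]\bmatrix{I_p & -G_1^{-1}(X_1^\st MX) \\ 0 & I_{n-p}}
\]
is nonsingular, so the block-diagonal Gramian
\[
[X_1,X_2]^\st M[X_1,X_2] = \bmatrix{G_1 & 0 \\ 0 & G_2}
\]
is nonsingular (as $M$ and $[X_1,X_2]$ are), which forces $G_2$ to be nonsingular as well.

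For (ii), I would verify $MX_2\Lambda_2+KX_2=0$ by premultiplying with the nonsingular $[X_1,X_2]^\st$. The top $p$ rows give $X_1^\st MX_2\Lambda_2 + X_1^\st KX_2 = 0$ by (i), while the bottom $n-p$ rows give $G_2\Lambda_2 + X_2^\st KX_2$, which vanishes by the very definition $\Lambda_2 := -G_2^{-1}(X_2^\st KX_2)$. Since $[X_1,X_2]^\st$ is nonsingular, this forces $MX_2\Lambda_2+KX_2=0$. Complementarity then comes for free from the already-established nonsingularity of $[X_1,X_2]$.

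The main delicate point is the $K$-orthogonality in (i): one cannot quote part $(iii)$ of Proposition \ref{orthprops} directly, because $(X_2,\cdot)$ is not yet known to be a deflating pair. Instead, the $(\st,\epsilon_1,\epsilon_2)$-structure must be exploited through $(X_1,\Lambda_1)$ alone to derive the key intertwining identity $X_1^\st K = -\epsilon_1\epsilon_2\Lambda_1^\st X_1^\st M$. Once this identity is in hand, everything else reduces to bookkeeping with the $2\times 2$ block decomposition afforded by $[X_1,X_2]$.
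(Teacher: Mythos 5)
Your proposal is correct and follows essentially the same route as the paper: direct verification of the $M$-orthogonality, use of the deflating relation for $X_1$ together with the $(\st,\epsilon_1,\epsilon_2)$-structure to get the $K$-orthogonality, nonsingularity of $G_2$ from the block-diagonal Gramian, and verification of $MX_2\Lambda_2+KX_2=0$ by premultiplying with the nonsingular $[X_1,X_2]^\st$. The only cosmetic difference is that you transpose the deflating relation to obtain $X_1^\st K=-\epsilon_1\epsilon_2\Lambda_1^\st X_1^\st M$, whereas the paper multiplies $MX_1\Lambda_1+KX_1=0$ by $X_2^\st$ on the left and then transposes; these are the same computation.
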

\proof
$(i)$ The identity $X_1^\st MX_2=0$ is easily verified.
The identity $X_1^\st KX_2=\epsilon_2(X_2^\st KX_1)^\st=0$  follows from 
$X_2^\st MX_1=\epsilon_1\,(X_1^\st MX_2)^\st=0$ by multiplying 
$MX_1\Lambda_1+KX_1=0$ with $X_2^\st$ from the left. The nonsingularity of $G_2$ follows from 
$\bmatrix{X_1 &X_2}^\st M\bmatrix{X_1 &X_2}={\rm diag}(G_1,G_2)$ and 
the nonsingularity of the matrices on the left hand side.
$(ii)$ The matrix $\bmatrix{X_1 &X_2}
=\bmatrix{X_1 &X}\bmatrix{I &-G_1^{-1}(X_1^\st MX)\\ 0 &I}$ is nonsingular. Thus, $X_2$ has full column rank. The results obtained so far imply that 
$\bmatrix{X_1 &X_2}^\st (MX_2\Lambda_2+K X_2)=0$. Thus,
$MX_2\Lambda_2+K X_2=0.$ 
\eproof
\section{Unstructured updates}\label{sec:unstr}
We now discuss the updating problem ({\bf P2}) for pencils without any prescribed structure.
By assumption $(X_f,\Lambda_f)$ and $(X_c,\Lambda_c)$
are complementary deflating pairs of $L(\lambda)=\lambda\, M+K$. Thus,
\begin{equation}\label{eq:defpairconds}
M X_f\Lambda_f+KX_f =0,\qquad
M X_c\Lambda_c+KX_c =0.
\end{equation}
Since $(X_f,\Lambda_f)$ and $(X_a,\Lambda_a)$ should be
 complementary deflating pairs of the updated pencil $L_\Delta(\lambda)=\lambda\, (M+\Delta M)+(K+\Delta K)$
the matrices $\Delta M,\Delta K$ we seek for should satisfy
\begin{equation}
\begin{array}{rcl}
(M+\Delta M)X_f\Lambda_f+(K+\Delta K)X_f &=&0,\\[.1 cm]
(M+\Delta M)X_a\Lambda_a+(K+\Delta K)X_a &=&0.
\end{array}
\end{equation}
Because of (\ref{eq:defpairconds}) an equivalent system of equations is
\begin{equation}\label{eq:DMDK2eq}
\Delta M X_f\Lambda_f+\Delta KX_f =0,
\qquad
\Delta M X_a\Lambda_a+\Delta KX_a =R_a, 
\end{equation}
where
\begin{equation}\label{eq:Rdef}
R_a:=-(MX_a\Lambda_a+KX_a)
=M(X_c\Lambda_c -X_a\Lambda_a)+K(X_c-X_a).
\end{equation}
Notice that 
\begin{equation}
R_a=MX_c(\Lambda_c-\Lambda_a)\quad \text{if}\quad X_a=X_c.
\end{equation}
Equations (\ref{eq:DMDK2eq}) can be written as
\begin{equation}\label{eq:DMDK1eq}
\underbrace{\bmatrix{\Delta M &\Delta K}}_{Y}
\underbrace{\bmatrix{X_f\Lambda_f  & X_a\Lambda_a \\
X_f & X_a}}_{A} \;=\;
\underbrace{\bmatrix{0 &R_a}}_{B}.
\end{equation}
According to a basic result on linear matrix equations the general
solution of (\ref{eq:DMDK1eq}) is
$$ Y=BA^\dagger+Z(I-AA^\dagger), \qquad Z\in \C^{n,2n}\text{ arbitrary},$$
where $A^\dagger=(A^*A)^{-1}A^*$ is the Moore-Penrose generalized inverse of $A$. Observe that in the present case $A^*A$ is indeed nonsingular since $A$ has full column rank. The latter holds
because $\bmatrix{X_f &X_a}$ is nonsingular by assumption.
Hence we have obtained a parametrization of all possible 
updates $Y=\bmatrix{\Delta M &\Delta K}$ that solve problem 
{\bf (P2)}. However, that the solution requires the knowledge of the matrix $A$ and hence the knowledge of $(X_f,\Lambda_f)$. This information is often not available in 
the applications. In the next section on structured pencils
we will derive updates whose construction only requires the knowledge of 
$(X_c,\Lambda_c)$ and a property  of the spectrum $\sigma(\Lambda_f)$ which is generically satisfied.

The theorem below provides a convenient subset of the general solution
set to Problem {\bf (P2)}. This theorem prepares the result on structured pencils in the next section.
\begin{theorem}\label{maintheo1}
Suppose that the assumptions of Problem {\bf (P2)} hold.
Let $U\in \C^{n \times p}$ be the unique matrix satisfying
$U^\st X_f=0$, and $U^\st X_a=I_p$.
$(\text{i.e. } U =([I,0]\,[X_a,X_f]^{-1})^\st)$,
 where $\st\in\{*,T\}$. Let $\widetilde M, \widetilde K \in \C^{n \times p}$ be such that
\begin{equation}\label{eq:basiceq1}
\widetilde M\,\Lambda_a+\widetilde K=R_a.
\end{equation}
Then the matrices 
$ \Delta M=\widetilde M  U^\st$ and $\Delta K=\widetilde K U^\st$
satisfy the requirements of problem {\bf (P2)}.
\end{theorem}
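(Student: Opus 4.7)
The theorem reduces to a direct verification, so my plan is to lay out the two checks cleanly rather than search for any clever construction.

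First I would observe that the matrix $U$ is well-defined: since $\bmatrix{X_a & X_f}$ is nonsingular by hypothesis, the formula $U^\st = \bmatrix{I_p & 0}\bmatrix{X_a & X_f}^{-1}$ yields the unique $n\times p$ matrix satisfying the two prescribed identities $U^\st X_a = I_p$ and $U^\st X_f = 0$. I would also remark that equation (\ref{eq:basiceq1}) admits solutions (trivially take $\widetilde{M}=0$, $\widetilde{K}=R_a$, or more generally $\widetilde{M}$ arbitrary and $\widetilde{K}=R_a-\widetilde{M}\Lambda_a$), so the ansatz is nonvacuous.

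Next, the bulk of the proof is two substitution checks. Having reduced (via (\ref{eq:Rdef})) Problem ({\bf P2}) to the pair of matrix equations (\ref{eq:DMDK2eq}), I would plug $\Delta M=\widetilde{M}U^\st$ and $\Delta K=\widetilde{K}U^\st$ into each of them. The first equation becomes
\begin{equation*}
\widetilde{M}U^\st X_f\Lambda_f+\widetilde{K}U^\st X_f = \widetilde{M}\cdot 0\cdot\Lambda_f+\widetilde{K}\cdot 0 =0,
\end{equation*}
using $U^\st X_f=0$. The second equation becomes
\begin{equation*}
\widetilde{M}U^\st X_a\Lambda_a+\widetilde{K}U^\st X_a = \widetilde{M}\Lambda_a+\widetilde{K}=R_a,
\end{equation*}
using $U^\st X_a=I_p$ together with the defining relation (\ref{eq:basiceq1}).

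Since (\ref{eq:DMDK2eq}) is equivalent to the two deflating-pair requirements that $(X_a,\Lambda_a)$ and $(X_f,\Lambda_f)$ be deflating pairs of the updated pencil $L_\Delta(\lambda)$, and complementarity is automatic because $\bmatrix{X_a & X_f}$ is nonsingular by hypothesis (the range data are unchanged by the update), the two verifications complete the proof. There is no real obstacle here; the only subtlety worth flagging in the write-up is that the theorem provides only a \emph{subset} of the full solution set from (\ref{eq:DMDK1eq}), corresponding to low-rank corrections of rank at most $p$ factored through $U^\st$ — this is precisely the structural feature that will later make it possible to impose $(\st,\epsilon_1,\epsilon_2)$-symmetry without requiring knowledge of $(X_f,\Lambda_f)$.
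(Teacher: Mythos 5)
Your proof is correct and is exactly the verification the paper has in mind: the paper's own proof consists of the single line ``straightforward verification using (\ref{eq:DMDK2eq})'', and you have simply written out that substitution, checking the $X_f$-equation via $U^\st X_f=0$ and the $X_a$-equation via $U^\st X_a=I_p$ together with (\ref{eq:basiceq1}). No differences in approach worth noting.
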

\begin{proof}
The proof is a straightforward verification using (\ref{eq:DMDK2eq}).
\end{proof}
Notice that to any $\widetilde M$ there is a unique 
$\widetilde K$ that solves (\ref{eq:basiceq1}), namely 
$\widetilde K=R_a-\widetilde M\Lambda_a$. This yields a 
parametrization of all solutions. Another parametrization is obtained as follows.
Equation (\ref{eq:basiceq1}) can be written in the form
$\bmatrix{\widetilde M & \widetilde K}\bmatrix{ \Lambda_a\\ I_p}=R_a.$
Thus, all its solutions are given (see \cite{AdhAlam}) via the Penrose inverse as
$$
\bmatrix{\widetilde M & \widetilde K} =
R_a\bmatrix{\Lam_a \\ I_p}^\dagger+
\bmatrix{Z_1 & Z_2} \left( \bmatrix{I_p & 0 \\ 0 & I_p} - \bmatrix{\Lam_a \\ I_p} \bmatrix{\Lam_a \\ I_p}^\dagger\right),
\quad Z_1,Z_2\in \C^{n\times p}\text{ arbitrary}.
$$
More explicitly, with the notation $H_a:=(\Lam_a^*\Lambda_a + I_p)^{-1}$,
\begin{equation}\label{eq:paramet}
\begin{array}{rcl}
 \widetilde M &=& R_a H_a\Lam_a^*+
 Z_1(I_p - \Lam_a H_a\Lam_a^*) - Z_2H_a\Lam_a^*, \\
  \widetilde K& =&
  R_aH_a- Z_1 \Lam_a H_a +Z_2 (I_p - H_a).
  \end{array}
 \end{equation}
\section{A general update result for pencils with symmetry}\label{sec:str}
We now discuss the updating problem ({\bf P2}) for pencils with 
$(\st,\epsilon_1,\epsilon_2)$-symmetry. 
The update method below only changes $\Lambda_c$ and fixes $X_c$ as well as $X_f$,
that is $X_a=X_c$.  For changing $X_a$ see the Remark \ref{rem:aftermain}.
The main requirement that makes our method work is
the spectral assumption $(a)$ in the theorem below.
\begin{theorem}\label{theo:stuctmain1}
Let $(X_c,\Lambda_c)$ and $(X_f,\Lambda_f)$ be complementary deflating pairs of the pencil 
$L(\lambda)=\lambda\, M+K\in \L_n{(\st,\epsilon_1,\epsilon_2)}$, where $\Lambda_c\in\C^{p,p}$. 
Suppose that
\begin{center}
$(a)\;$ 
$\sigma(\Lambda_c)\cap 
\sigma(\epsilon_1\epsilon_2\Lambda_f^\st)=\emptyset$ \quad and
\quad $(b)\;$ $G:=X_c^\st MX_c$ is nonsingular.
\end{center}
Let $\Lambda_a,\hat M,\hat K\in\C^{p,p}$ be such that
\begin{equation}\label{eq:basiceq2}
\hat M \Lambda_a+\hat K=G(\Lambda_c-\Lambda_a).
\end{equation}
Set
$$\Delta M:=U\hat MU^\st,\qquad \Delta K:=U\hat K U^\st,
\qquad \text{where }\quad U:=MX_cG^{-1}.$$
Then $(X_c,\Lambda_a)$ and $(X_f,\Lambda_f)$ are complementary deflating pairs of the pencil $L_\Delta(\lambda)=(M+\Delta M)\, \lambda+(K+\Delta K)$. Furthermore, 
$L_\Delta(\lambda)\in\L_n{(\st,\epsilon_1,\epsilon_2)}$ whenever 
$\lambda\,\hat M+\hat K   \in \L_p{(\st,\epsilon_1,\epsilon_2)} $. The latter holds if and only if 
$\lambda\, \hat M+(\hat M+G)\Lambda_a\in
 \L_p{(\st,\epsilon_1,\epsilon_2)}$.
\end{theorem}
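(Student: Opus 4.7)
My plan is to verify the three assertions sequentially: (1) the perturbed pencil has $(X_c,\Lambda_a)$ and $(X_f,\Lambda_f)$ as complementary deflating pairs; (2) the claimed structure inheritance; (3) the stated equivalence of structural conditions. The engine throughout is the identity $U^\star X_c = I_p$ together with $U^\star X_f = 0$. From $G^\star=\epsilon_1 G$ (which gives $G^{-\star}=\epsilon_1 G^{-1}$) one computes $U^\star = \epsilon_1 G^{-\star} X_c^\star M$, and then both identities follow at once: $X_c^\star M X_f=0$ by the spectral hypothesis $(a)$ via Proposition \ref{orthprops}(iv), and $X_c^\star M X_c=G$ by definition of $G$.

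For assertion (1), the relation $\Delta M X_f\Lambda_f+\Delta K X_f=U(\hat M U^\star X_f\Lambda_f+\hat K U^\star X_f)=0$ shows that $(X_f,\Lambda_f)$ remains a deflating pair. For $(X_c,\Lambda_a)$ I substitute $U^\star X_c=I_p$, $MX_c=UG$, and $KX_c=-MX_c\Lambda_c=-UG\Lambda_c$, obtaining
\[
(M+\Delta M)X_c\Lambda_a+(K+\Delta K)X_c = U\bigl[\hat M\Lambda_a+\hat K - G(\Lambda_c-\Lambda_a)\bigr],
\]
which vanishes exactly by hypothesis (\ref{eq:basiceq2}). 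Complementarity is inherited since $[X_c,X_f]$ is nonsingular by assumption on the original deflating pairs.

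Assertion (2) is essentially by construction: $\Delta M=U\hat M U^\star$ and $\Delta K=U\hat K U^\star$ are $\star$-congruences of $\hat M, \hat K$ by $U$, so $\Delta M^\star=U\hat M^\star U^\star$ and $\Delta K^\star=U\hat K^\star U^\star$. Hence if $\hat M^\star=\epsilon_1\hat M$ and $\hat K^\star=\epsilon_2\hat K$, the perturbations inherit the $(\star,\epsilon_1,\epsilon_2)$-symmetry, and so do $M+\Delta M$ and $K+\Delta K$.

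For assertion (3), I rewrite (\ref{eq:basiceq2}) as $\hat K = G\Lambda_c - (G+\hat M)\Lambda_a$. Taking $\star$-adjoints and using $G^\star=\epsilon_1 G$ gives $\hat K^\star = \epsilon_1\Lambda_c^\star G - \Lambda_a^\star(G+\hat M)^\star$, while $\epsilon_2\hat K = \epsilon_2 G\Lambda_c - \epsilon_2(G+\hat M)\Lambda_a$. Applying Proposition \ref{orthprops}(iii) to the self-pairing $(X_c,\Lambda_c)$ yields $G\Lambda_c=\epsilon_1\epsilon_2\Lambda_c^\star G$, equivalently $\epsilon_2 G\Lambda_c=\epsilon_1\Lambda_c^\star G$, so the $\Lambda_c$-terms cancel and (given $\hat M^\star=\epsilon_1\hat M$) the condition $\hat K^\star=\epsilon_2\hat K$ collapses to $((G+\hat M)\Lambda_a)^\star=\epsilon_2(G+\hat M)\Lambda_a$, which is precisely the $\hat K$-part of $\lambda\hat M+(\hat M+G)\Lambda_a\in\L_p(\star,\epsilon_1,\epsilon_2)$. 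The only nontrivial step in the whole argument is the double invocation of Proposition \ref{orthprops}, first for $U^\star X_f=0$ and then for eliminating the $\Lambda_c$-terms; everything else is direct substitution and bookkeeping.
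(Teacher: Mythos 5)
Your proof is correct and follows essentially the same route as the paper: the paper establishes $X_c^\st U=I_p$ and $X_f^\st U=0$ (the latter via Proposition \ref{orthprops}$(iv)$ and condition $(a)$), notes $R_a=UG(\Lambda_c-\Lambda_a)$, and then invokes Theorem \ref{maintheo1}, whose proof is exactly the direct substitution you carry out inline. You additionally spell out the structural assertions that the paper dismisses as ``obvious,'' and your use of Proposition \ref{orthprops}$(iii)$ to show $(G\Lambda_c)^\st=\epsilon_2\,G\Lambda_c$ and cancel the $\Lambda_c$-terms is precisely the intended justification of the final equivalence.
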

\proof
Obviously, $X_c^\st U=I$. By the by part $(iv)$ of Proposition \ref{orthprops} and the spectral condition
$(a)$ we have $X_f^\st U=0$.
For $X_a=X_c$ the matrix $R_a$ from (\ref{eq:Rdef})
satisfies $R_a=MX_c(\Lambda_c-\Lambda_a)=UG
(\Lambda_c-\Lambda_a)$. Hence  (\ref{eq:basiceq2}) implies
$$(U\hat M)\Lambda_a+(U\hat K)=R_a.$$
Thus, the first statement of the theorem follows from
Theorem \ref{maintheo1}. The other statements are obvious.
\eproof

\begin{remark}\label{rem:aftermain}
\begin{itemize}
\item[$(i)$] If $X_c^\st K X_c$ is nonsingular then $G=X_c^\st M X_c$ is also nonsingular, and
the matrix $U$ in Theorem \ref{theo:stuctmain1} may  be written in terms of $K$ as 
 $U=KX_c(X_c^\st KX_c)^{-1}$. To see this, multiply $MX_c\Lambda_c+KX_c=0$ from the left with $X_c^\st$ and reorder terms so that $G\Lambda_c= -X_c^\st KX_c$. Thus $G$ and $\Lambda_c$ are nonsingular and $U=MX_cG^{-1}=-KX_c\Lambda_c^{-1}G^{-1}=KX_c(X_c^\st KX_c)^{-1}$.
\item[$(ii)$] For a given $\hat M$ there is a unique $\hat K$ that solves
(\ref{eq:basiceq2}), namely $\hat K=G(\Lambda_c-\Lambda_a)-\hat M\Lambda_a$. 
This yields a parameterization of all solution pairs 
$(\hat M,\hat K)$. Analogously to the formula
 (\ref{eq:paramet}) an alternative parameterization of all solutions of (\ref{eq:basiceq2}) is given by 
\begin{equation} \label{exprMcapKcap}
\begin{array}{rcl}
  \hat M &=&  G(\Lambda_c-\Lambda_a)H_a\Lam_a^*+
 Z_1(I_p - \Lam_aH_a\Lam_a^*) - Z_2H_a\Lam_a^*, \\
  \hat K& =&
  G(\Lambda_c-\Lambda_a)H_a- Z_1 \Lam_a H_a +Z_2 (I_p - H_a). 
  \end{array}
 \end{equation} 
 where $H_a=(\Lam_a^*\Lambda_a + I_p)^{-1}$ and $Z_1,Z_2\in \C^{p\times p}$ are arbitrary. Indeed note that the equation  (\ref{eq:basiceq2}) can be written as $$\bmatrix{\hat M & \hat K}\bmatrix{\Lam_a \\ I_p} = G(\Lam_c-\Lam_a)$$ which is a linear system of the form $AX=B,$ where $X$ is a  full rank matrix and $A$ is unknown. All such $A$ can be written as $A=BX^\dagger + Z(I-XX^\dagger)$ for any arbitrary matrix $Z$ of compatible dimension, where $X^\dagger$ denotes the pseudoinverse of $X$ if the pair $(X, B)$ satisfies $BX^\dagger X=B,$ see  \cite{AdhAlam}. Thus the expression given by (\ref{exprMcapKcap}) can be obtained. Further, it may be noted that structured solution of the equation (\ref{eq:basiceq2}) can be obtained by imposing structural conditions on the parameters $Z_1, Z_2.$

 \item[$(iii)$] Let $Z \in \C^{p\times p}$ be nonsingular. 
 Let $(\hat M,\hat K)$
 be solutions of the modified equation
 $$
 \hat M (Z\Lambda_aZ^{-1})+\hat K=G(\Lambda_c-Z\Lambda_aZ^{-1}).
 $$
 Then by Theorem \ref{theo:stuctmain1}, $(X_c, Z\Lambda_aZ^{-1})$ is a deflating pair of the associated pencil 
 $L_{\Delta}(\lambda)$. Thus $(X_cZ, \Lambda_a)$ is also a deflating pair of 
 $L_{\Delta}(\lambda)$.
 \item[(iv)] If the spectrum of $\Lambda_c$ is closed with respect 
 to the $(\st,\epsilon_1,\epsilon_2)$-symmetry, that is 
 $\sigma(\Lambda_c)= \sigma(\epsilon_1\epsilon_2\Lambda_c^\st)$ then 
 the spectral condition $(a)$ is satisfied if the eigenvalues of $\Lambda_c$ are
 all different from the eigenvalues of $\Lambda_f$.
\end{itemize}
\end{remark}
The next theorem is about a simple subclass of perturbations.
\begin{theorem}
In the situation of Theorem \ref{theo:stuctmain1} let $L_\Delta(\lambda)$ be defined 
by  
$\hat M=t\, G$, $\hat K=G(\Lambda_c-(1+t)\Lambda_a)$
 for some $t\in \R$, that is
 \begin{eqnarray*}
 \Delta M &=& U\hat MU^\st \,=\, 
 t\, MX_c(X^\st MX_c)^{-1}X_c^\st M,\\
 \Delta K &=& U\hat KU^\st \,=\,
 MX_c(\Lambda_c-(1+t)\Lambda_a)(X_c^\st MX_c)^{-1}X_c^\st M.
 \end{eqnarray*} 
Then $(X_c,\Lambda_a)$ and $(X_f,\Lambda_f)$ are complementary deflating pairs of $L_\Delta(\lambda)$.
 Suppose that $\Lambda_a$ satisfies
 $G\Lambda_a=\epsilon_2(G\Lambda_a)^\st.$
Then $L_\Delta(\lambda)\in \L_n{(\st,\epsilon_1,\epsilon_2)}$.
\end{theorem}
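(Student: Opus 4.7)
The plan is to reduce the theorem to an application of Theorem \ref{theo:stuctmain1}. First I would verify that the proposed $\hat M = tG$ and $\hat K = G(\Lambda_c - (1+t)\Lambda_a)$ satisfy the defining equation (\ref{eq:basiceq2}), namely $\hat M\Lambda_a + \hat K = G(\Lambda_c - \Lambda_a)$. A direct substitution gives $tG\Lambda_a + G\Lambda_c - (1+t)G\Lambda_a = G\Lambda_c - G\Lambda_a$, so equality holds. Since the expressions for $\Delta M = U\hat M U^\star$ and $\Delta K = U\hat K U^\star$ stated in the theorem are just the result of plugging this $(\hat M,\hat K)$ into the formulas of Theorem \ref{theo:stuctmain1}, the first conclusion -- that $(X_c,\Lambda_a)$ and $(X_f,\Lambda_f)$ are complementary deflating pairs of $L_\Delta(\lambda)$ -- follows directly.

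For the structural claim, according to Theorem \ref{theo:stuctmain1} it suffices to show that $\lambda\hat M + \hat K \in \L_p(\star,\epsilon_1,\epsilon_2)$, that is, $\hat M^\star = \epsilon_1\hat M$ and $\hat K^\star = \epsilon_2\hat K$. For $\hat M$ this is immediate: by part (ii) of Proposition \ref{orthprops} applied with $X_1 = X_2 = X_c$, the Gramian $G = X_c^\star M X_c$ satisfies $G^\star = \epsilon_1 G$, so $\hat M^\star = tG^\star = t\epsilon_1 G = \epsilon_1 \hat M$ for every real $t$.

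For $\hat K$ I would split it as $\hat K = G\Lambda_c - (1+t)G\Lambda_a$ and treat the two summands separately. Part (iii) of Proposition \ref{orthprops} with $j=k=1$ and $X_1 = X_c$ yields $G\Lambda_c = -F_{cc}$, where $F_{cc} = X_c^\star K X_c$; and by part (ii) of the same proposition, $F_{cc}^\star = \epsilon_2 F_{cc}$. Hence $(G\Lambda_c)^\star = -F_{cc}^\star = -\epsilon_2 F_{cc} = \epsilon_2 G\Lambda_c$. Combining this with the standing hypothesis $G\Lambda_a = \epsilon_2(G\Lambda_a)^\star$, which (using $\epsilon_2^2 = 1$) is equivalent to $(G\Lambda_a)^\star = \epsilon_2 G\Lambda_a$, I get
\begin{equation*}
\hat K^\star = (G\Lambda_c)^\star - (1+t)(G\Lambda_a)^\star = \epsilon_2 G\Lambda_c - (1+t)\epsilon_2 G\Lambda_a = \epsilon_2 \hat K.
\end{equation*}

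I do not anticipate a real obstacle here; the whole argument is a bookkeeping exercise that rides on the earlier propositions. The only substantive observation is that the $\epsilon_2$-symmetry of $G\Lambda_c$ is not an extra hypothesis but is automatic from Proposition \ref{orthprops}(ii)--(iii), so the sole external symmetry assumption needed is the one imposed on $G\Lambda_a$.
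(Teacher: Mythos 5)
Your proof is correct and follows exactly the route the paper intends: the paper states this theorem without proof as an immediate consequence of Theorem \ref{theo:stuctmain1}, and your verification that $(\hat M,\hat K)=(tG,\,G(\Lambda_c-(1+t)\Lambda_a))$ satisfies (\ref{eq:basiceq2}) together with the symmetry check via Proposition \ref{orthprops} is precisely the omitted argument. The one worthwhile observation you make explicit --- that $(G\Lambda_c)^\st=\epsilon_2 G\Lambda_c$ holds automatically, so only the symmetry of $G\Lambda_a$ needs to be assumed --- is consistent with the equivalent condition $\lambda\,\hat M+(\hat M+G)\Lambda_a\in\L_p(\st,\epsilon_1,\epsilon_2)$ already recorded in Theorem \ref{theo:stuctmain1}.
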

\section{Updates for especially structured matrix pencils}
In this section we determine parametric updates which solve the problem \textbf{(P1)} for specific structured matrix pencils which are subsets of Hermitian, $\star$-odd, $\star$-even matrix pencils. 

\subsection{The Hermitian case with positive definite $M$}
Suppose  that $L(\lambda)=\lambda M+K \in \L_{\herm}$ with positive definite $M$. 
Then all  eigenvalues of $L(\lambda)$ are real and there exists a basis 
$x_1^c,\ldots,x_p^c,\, x_{p+1}^f,\ldots x_{n}^f$
of eigenvectors such that $L(\lambda_i^c)x^c_i=L(\lambda_i^f)x^f_i=0$.
By normalizing the eigenvectors (apply for Gram-Schmidt if some $\lambda_i^c$ coincide) we may assume
that $(x_i^c)^* Mx^c_j=0$ for $i\not=j$ and $(x_i^c)^* Mx_i^c=1.$ Thus, the $M$-Gramian
of the matrix
$X_c=[x^c_1 \;\ldots\; x^c_p]$ satisfies $G=X_c^*MX_c=I_p$.
Let $\Lambda_c={\rm diag}(\lambda^c_i)$, $\Lambda_f={\rm diag}(\lambda^f_i)$.
The spectral condition $(a)$ in Theorem \ref{theo:stuctmain1} reads
$$\{\lam_1^c, \hdots, \lam_p^c\}\cap \{\lam_{p+1}^f, \hdots, \lam_{n}^f\}=\emptyset.$$
If this condition is fulfilled the update matrices in Theorem \ref{theo:stuctmain1}
are \begin{equation} \label{DelMDelK_M+ve}
\Delta M=MX_c\,\hat M X_c^*M, \qquad
\Delta K=MX_c\,(\Lambda_c-\Lambda_a-\hat M\Lambda_a) X_c^*M.\end{equation}
Both matrices are Hermitian if $\hat M$ and $\Lambda_a$ are diagonal and real. If $M$ and $K$ are 
real matrices then $X_c$ can also be chosen to be real, and consequently the update matrices are real, too.

\begin{remark}\label{remark:carvelo}(Recovery of results in Carvalho et al. \cite{Carvalho})
If $\Lambda_a$ is a real diagonal matrix then choosing $\hat M=0,$ we obtain $\Delta M=0$ and $\Delta K=MX_c(\Lambda_c-\Lambda_a)X_c^*M$ from $(\ref{DelMDelK_M+ve})$. On the otherhand, putting $\hat K=0$ and assuming $\Lambda_a$ to be nonsingular, we achieve $\Delta K=0$ and $\Delta M=MX_c(\Lambda_c \Lambda_a^{-1}-I_p)X_c^*M$.  

Here we mention that when $M$ and $K$ are real symmetric positive definite matrices then the solution $\Delta M=0$ and $\Delta K=MX_c(\Lambda_c-\Lambda_a)X_c^TM$  realizes the solution obtained by Carvalho et al. in \cite{Carvalho2} for undampted models of the form $L(\lam)=\lam^2 M + K$. In addition, in their paper, the authors provide the solution where $\Delta K = MX_c\Psi X_c^TM$ and $\Psi$ has to be obtained by solving a matrix equation numerically. In contrast, the proposed solution here can be obtained directly by setting $\Psi = (\Lambda_c^2-\Lambda_a^2).$
\end{remark}
\begin{remark} \label{HermMK}(Recovery of results in Mao et al. \cite{DaiMKupdate})
If $\{\lam_1^c, \hdots, \lam_p^c\}\cap \{\lam_{p+1}^f, \hdots, \lam_{n}^f\}=\emptyset$ and $\Lambda_a$ is a real diagonal matrix then the Hermitian update matrices in Theorem \ref{theo:stuctmain1}, are given by $\Delta M=MX_c \hat MX_c^*M$ and $\Delta K=MX_c \hat KX_c^*M$ with \begin{equation}\label{eq:mao}
\hat M=H_a\left[(\Lambda_c-\Lambda_a)\Lambda_a+Z_1-Z_2\Lambda_a \right], \,\, \,\, \hat K=H_a\left[(\Lambda_c-\Lambda_a)-Z_1\Lambda_a+Z_2 \Lambda_a^2 \right]
\end{equation} where $H_a=(\Lambda_a^2+I_p)^{-1}$ and $Z_1,\,Z_2$ are arbitrary real diagonal matrices of compatible sizes.  

It is be noted that these solution sets identify the solutions given by  Mao et al. in \cite{DaiMKupdate}. The perturbations obtained in their paper are given by \beano \Delta M &=& MX_c(\Phi -\delta_{p+1}I_p)X_c^TM + (\delta_{p+1} -1) M \\ \Delta K &=& MX_c(\Phi\Lam_a - \delta_{p+1}\Lam_c)X_c^TM + (\delta_{p+1}-1)K \eeano where $\Phi$ is a symmetric positive definite matrix which satisfies $\Phi\Lam_a = \Lam_a \Phi,$ and $\delta_{p+1}>0$ is a real number. Setting $Z_1=H_a^-1(\Phi -I_p),$ $Z_2=\Lam_c - \Lam_a,$ the perturbations derived in this paper become $$\Delta M=MX_c\hat M X_c^TM, \, \Delta K= MX_c\hat K X_c^TM$$ which realizes Mao et al.s solution when $\delta_{p+1}=1,$ where $\hat M, \hat K$ are given by equaton (\ref{eq:mao}).

Moreover, if the diagonal matrices $Z_1$ and $Z_2$ are chosen such that $(\Lambda_c-\Lambda_a)\Lambda_a+Z_1-Z_2\Lambda_a$ is a diagonal matrix with non-negative diagonal entries then $\Delta M$ is a positive semi-definite matrix, that is $M+\Delta M>0$.
\end{remark}

\begin{corollary} \label{Cor:Herm1}
Let the conditions of Remark \ref{HermMK} be satisfied. Besides, assume that $K>0$ and $\lam_i^a<0, i=1,\hdots,p.$ Then if $Z_1=\mbox{diag}\left(z_{11}^{(1)},\dots ,z_{pp}^{(1)}\right)$ and $Z_2=\mbox{diag}\left(z_{11}^{(2)},\dots ,z_{pp}^{(2)}\right)$ are chosen such that $$z_{ii}^{(1)}-z_{ii}^{(2)} \lam_i^a \geq \, \max\, \left\{(\lambda^a_i-\lambda^c_i) \lambda_i^a, (\lambda^c_i/\lambda^a_i-1) \right\}, i=1,\dots,p$$ then the perturbations $\triangle M, \triangle K$ in Remark \ref{HermMK} are positive semi-definite matrices.
\end{corollary}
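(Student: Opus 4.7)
The plan is to reduce positive semi-definiteness of $\Delta M$ and $\Delta K$ to that of the small diagonal matrices $\hat M$ and $\hat K$, and then verify the latter entry-by-entry from the hypothesis on $Z_1,Z_2$.

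First I would observe that under the standing hypotheses, every matrix appearing in the formulas is real and diagonal. Since $L(\lambda)\in\L_n(\herm)$ with $M>0$ and $K>0$, Proposition 3.7 tells us that all eigenvalues $\lambda_i^c,\lambda_j^f$ are real and in fact strictly negative, and we may fix the $M$-orthonormal eigenbasis used in Remark~\ref{HermMK}. Consequently $\Lambda_c,\Lambda_a,Z_1,Z_2$ and $H_a=(\Lambda_a^2+I_p)^{-1}$ are all real diagonal; in particular $H_a$ has strictly positive diagonal entries because $\lambda_i^a\in\R$.

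Next I would write $\Delta M = (MX_c)\,\hat M\,(MX_c)^{*}$ and $\Delta K = (MX_c)\,\hat K\,(MX_c)^{*}$, which exhibits $\Delta M$ and $\Delta K$ as Hermitian congruences of the diagonal matrices $\hat M$ and $\hat K$. Hence it suffices to show that each diagonal entry of $\hat M$ and $\hat K$ is non-negative. Writing out the $i$-th entries from the formulas in Remark~\ref{HermMK},
\begin{align*}
 \hat M_{ii} &= \frac{1}{(\lambda_i^a)^2+1}\Bigl[(\lambda_i^c-\lambda_i^a)\lambda_i^a + z_{ii}^{(1)} - z_{ii}^{(2)}\lambda_i^a\Bigr],\\
 \hat K_{ii} &= \frac{1}{(\lambda_i^a)^2+1}\Bigl[(\lambda_i^c-\lambda_i^a) - z_{ii}^{(1)}\lambda_i^a + z_{ii}^{(2)}(\lambda_i^a)^2\Bigr].
\end{align*}
Non-negativity of $\hat M_{ii}$ is equivalent to $z_{ii}^{(1)}-z_{ii}^{(2)}\lambda_i^a\ge (\lambda_i^a-\lambda_i^c)\lambda_i^a$, which is the first branch of the max in the hypothesis. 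For $\hat K_{ii}\ge 0$ I would rewrite the bracket as $(\lambda_i^c-\lambda_i^a) - \lambda_i^a\bigl(z_{ii}^{(1)}-z_{ii}^{(2)}\lambda_i^a\bigr)\ge 0$ and then divide by $-\lambda_i^a>0$ (using the assumption $\lambda_i^a<0$), which yields exactly $z_{ii}^{(1)}-z_{ii}^{(2)}\lambda_i^a\ge \lambda_i^c/\lambda_i^a-1$, the second branch of the max.

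The main (and only) delicate point is the sign flip in the last manipulation: one must use $\lambda_i^a<0$ to turn $-\lambda_i^a$ into a positive factor before dividing. Once both entry-wise inequalities are in place, $\hat M$ and $\hat K$ are diagonal with non-negative entries, hence positive semi-definite, and the congruence by $MX_c$ transfers this to $\Delta M$ and $\Delta K$, completing the proof.
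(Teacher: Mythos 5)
Your proposal is correct and fills in exactly the computation the paper declares ``straightforward'': reduce to the diagonal entries of $\hat M$ and $\hat K$, check each against the two branches of the max (using $\lambda_i^a<0$ for the sign flip in the $\hat K$ inequality), and transfer positive semi-definiteness through the congruence by $MX_c$. The only cosmetic difference is that the paper's one-line proof emphasizes $\lambda_i^c<0$ (via the Rayleigh quotient with $K>0$), a fact your verification does not actually need; otherwise the approaches coincide.
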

\begin{proof} Note that $\lam_i^c <0$ since $\lam_i^c = -\frac{(x^c_i)^*K x^c_i}{(x^c_i)^*M x^c_i}.$ Then the proof is straightforward and easy to check.
\end{proof}

In the following we explain how the above results can be used to solve the standard model updating problem with no spillover effect for undamped models. Suppose that $M>0$ and $K^*=K$ are complex matrices of order $n.$ Then the eigenvalues of the matrix pencil $L(\lam)=\lam^2 M+K$ occur in pair $(\lam,\,-\lam)$ corresponding to an eigenvector $x\in\C^n.$ Besides, $\lam$ is either a real number or a purely imaginary number. 

Let $(\pm \lam^c_i, x_i^c), i=1,\hdots, p$ denote the eigenpairs of $L(\lam)$ that are to be changed to the aimed eigenvalues $\pm \lam^a_i, i=1,\hdots,p$ of $L_\triangle(\lam)=\lam^2 (M+\triangle M)+(K+\triangle K),$ for some positive semi-definite Hermitian matrix $\triangle M$ and $\triangle K\in\H_n.$ Setting $\Lam_c=\diag\left(\lam_1^c,\lam_2^c,\,\dots\,,\lam_p^c\right)^2,$ $\Lam_a=\diag\left(\lam_1^a,\lam_2^a,\,\dots\,,\lam_p^a\right)^2,$ $\Lam_f=\diag\left(\lam_{p+1}^f,\lam_{p+2}^f,\,\dots\,,\lam_{n}^f\right)^2,$ and $X_c=\bmatrix{x_1^c & x_2^c & \dots & x_p^c},$ the MUP with no spillover effect for $L(\lam)$ translates to the problem {\bf (P1).}

We depict the same in the following example which is taken from \cite{Carvalho2}.

\begin{example} This example has been taken from \cite{Carvalho2}.
Suppose $L(\lam)=\lam^2 M+K$ with 
$M=\mbox{diag}\left(1.294,\,1.294,\,1.294,\,1.294,\,1.294 \right)>0$ and $$K=\bmatrix{
1188.5000& 196.6000& 0& 0& -642.4000 \\
196.6000& 626.3000& 0& -555.6000& 0 \\
0& 0& 1188.5000& -196.6000& -546.1000 \\
0& -555.6000& -196.6000& 626.3000& 196.6000 \\
-642.4000& 0& -546.1000& 196.6000& 4019.1000} >0.$$

Let $\lam^c_1=57.4206i,\,\lam^c_2=4.8629i$ and $\lam^a_1=57.4247i,\,\lam^a_2=4.8112i.$ Suppose that we want to replace the set of eigenvalues $\{\lam^c_1,\,-\lam^c_1,\,\lam^c_2,\,-\lam^c_2\}$ of $L(\lam)$ by the desired set of eigenvalues $\{\lam^a_1,\,-\lam^a_1,\,\lam^a_2,\,-\lam^a_2\}$ respectively. Thus
$\Lambda_c=\mbox{diag}\left(-3297.13,\,-23.648\right),\,\Lam_a=\mbox{diag}\left(-3297.6,\,-23.148\right)$ and \begin{center}$X_c=\bmatrix{
 -0.177539&   0.125286 \\
  -0.018246&  -0.611759 \\
  -0.153557&  -0.085635 \\
   0.056719&  -0.611579 \\
   0.845073&   0.038600}.$ \end{center} 

Then by Corollary \ref{Cor:Herm1}, choosing $Z_1=\mbox{diag}\left(0,\,0.021592\right),\,Z_2=\mbox{diag}\left( 0.47136,\, 0 \right)$ we obtain \begin{center} $\triangle M=10^{-3} \bmatrix{
0.5674& -2.7703& -0.3878& -2.7695& 0.1747\\
-2.7703& 13.5270& 1.8935& 13.5231& -0.8535 \\
-0.3878& 1.8935& 0.2651& 1.8930& -0.1196 \\
-2.7695& 13.5231& 1.8930& 13.5191& -0.8532 \\
0.1747& -0.8535& -0.1196& -0.8532& 0.0543
} \geq 0$ and \\ $\triangle K=10^{-2} \bmatrix{
2.4878& 0.2557& 2.1517& -0.7948& -11.8415 \\
0.2557& 0.0263& 0.2211& -0.0817& -1.2170 \\
2.1517& 0.2211& 1.8611& -0.6874& -10.2420 \\
-0.7948& -0.0817& -0.6874& 0.2539& 3.7831 \\
 -11.8415& -1.2170& -10.2420& 3.7831& 56.3650
 } \geq 0. $ \end{center}
On taking $\Lambda_f=\mbox{diag}\left(-679.39,\,-942.69,\,-968.03\right)$ and $X_f=\bmatrix{
0.547227&   0.642402&  -0.115946 \\
-0.262485&   0.244128&  -0.519345 \\
0.522356&  -0.545451&  -0.414139 \\
0.313086&  -0.033487&   0.544433 \\
0.183201&   0.043366&  -0.147365}$ we obtain
$\|(M+\triangle M)X_f \Lambda_f+(K+\triangle K)X_f \|_F=7.7524 \times 10^{-13}\,$ which shows that the unmeasured spectral data remain undisturbed. 

Hence we conclude that eigenvalues of $L_{\triangle}(\lam) = \lam^2(M+\triangle M)+(K+\triangle K)$ are $\{\lam^a_1,\,-\lam^a_1,\,\lam^a_2,\,-\lam^a_2\}.$ Therefore eigenvalues of $L(\lam)$ are replaced by the desired eigenvalues with maintaining no spillover condition.
\end{example}

\subsection{The $\star$-odd matrix pencils with positive definite $M$}
Suppose  that $L(\lambda)=\lambda M+K$ is a $*$-odd matrix pencil with positive definite $M$. 
Then all eigenvalues of $L(\lambda)$ are either zero or purely imaginary number and there exists a basis 
$x_1^c,\ldots,x_p^c,\, x_{p+1}^f,\ldots , x_{n}^f$
of eigenvectors such that $L(\lambda_i^c)x^c_i=L(\lambda_i^f)x^f_i=0$.
By normalizing the eigenvectors, we may assume
that $(x_i^c)^* Mx^c_j=0$ for $i\not=j$ and $(x_i^c)^* Mx_i^c=1.$ Then the $M$-Gramian is given by $G=X_c^*MX_c=I_p$ where
$X_c=[x^c_1 \;\ldots\; x^c_p]$.
Let $\Lambda_c={\rm diag}(\lambda^c_i)$, $\Lambda_f={\rm diag}(\lambda^f_i)$.
Assuming the spectral condition $(a)$ in Theorem \ref{theo:stuctmain1}, let
$$\{\lam_1^c, \hdots, \lam_p^c\}\cap \{\lam_{p+1}^f, \hdots, \lam_{n}^f\}=\emptyset.$$
Then the update matrices in Theorem \ref{theo:stuctmain1}
are \begin{equation} \label{DelMDelK_*odd}
\Delta M=MX_c\,\hat M X_c^*M, \qquad
\Delta K=MX_c\,(\Lambda_c-\Lambda_a-\hat M\Lambda_a) X_c^*M.\end{equation}
Thus $\Delta M=(\Delta M)^*, \Delta K=-(\Delta K)^*$ if $\hat M$ is a real diagonal matrix and $\Lambda_a$ is a imaginary diagonal matrix. 


\begin{remark}
If $\hat M$ in $(\ref{DelMDelK_*odd})$ is chosen to be a diagonal matrix with non-negative entries then $M+\Delta M >0.$
\end{remark}
\begin{remark} \label{*oddMK}
If $\{\lam_1^c, \hdots, \lam_p^c\}\cap \{\lam_{p+1}^f, \hdots, \lam_{n}^f\}=\emptyset$ and $\Lambda_a$ is a diagonal matrix with purely imaginary complex numbers, the structured update matrices in Theorem \ref{theo:stuctmain1} are given by $\Delta M=MX_c \hat MX_c^*M$ and $\Delta K=MX_c \hat KX_c^*M$ with \begin{equation}
\hat M=H_a\left[(\Lambda_a-\Lambda_c)\Lambda_a+Z_1+Z_2\Lambda_a \right], \,\, \,\, \hat K=H_a\left[(\Lambda_c-\Lambda_a)-Z_1\Lambda_a-Z_2 \Lambda_a^2 \right]
\end{equation} where $H_a=(I_p-\Lambda_a^2)^{-1},$ $Z_1$ is an arbitrary real diagonal matrix and $Z_2$ is an arbitrary diagonal matrix with purely imaginary diagonal entries.  

Moreover, if the diagonal matrices $Z_1$ and $Z_2$ are chosen such that $(\Lambda_a-\Lambda_c)\Lambda_a+Z_1+Z_2\Lambda_a$ is a diagonal matrix with non-negative diagonal entries then $\Delta M$ is a positive semi-definite matrix, that is $M+\Delta M>0.$
\end{remark}

Now let us consider $T$-odd matrix pencils $L(\lam)=\lam M+K\in \R^{n\times n}[\lam].$ Then obviously the complex eigenvalues of $L(\lam)$ are purely imaginary which exist in conjugate pairs, whereas zero can be the only real eigenvalue of $L(\lam).$ Moreover if $x$ is an eigenvector corresponding to the complex eigenvalue $\lam$ then $\overline{x}$ is an eigenvector corresponding to the eigenvalue $\overline{\lam}=-\lam$ of $L(\lam).$ As usual, let the nonzero eigenvalues $\lam^c_i, \overline{\lam_i^c}, 1\leq i \leq p$ of $L(\lam)$ are to be replaced by $\lam^a_i, \overline{\lam_i^a}$ with no spillover effect in the (structured) perturbed pencil $L_\Delta(\lam).$ If $x_i^c$ denotes the normalized (complex) eigenvector corresponding to the eigenvalue $\lam_i^c, 1\leq i\leq p$ then we may assume that $X^*_cMX_c=2I_{2p}$ where $X_c=[x_1^c \, \overline{x_1^c} \, \hdots \, x_p^c \, \overline{x_p^c}].$ This implies $\hat{X}_c^TM\hat{X}_c=I_{2p}$ where $\hat{X}_c=[\re(x_1^c) \, \im(x_1^c) \, \hdots \, \re(x_p^c) \, \im(x_p^c)].$ Indeed, note that $\hat{X}_c=X_cZ$ where $$Z=\diag\left(\frac{1}{2}\bmatrix{1 & -i \\ 1 & i}, \,\, \hdots, \,\, \frac{1}{2}\bmatrix{1 & -i \\ 1 & i}\right)\in\C^{2p\times 2p}.$$

Let $\Lambda_c=\mbox{diag}(\Lambda^c_1,\dots, \Lambda^c_p)$ and $\Lambda_a=\mbox{diag}(\Lambda^a_1,\dots, \Lambda^a_p)$ where $\Lambda^c_j=\bmatrix{0& \im(\lambda^c_j) \\ -\im(\lambda^c_j)& 0}$ and $\Lambda^a_j=\bmatrix{0& \im(\lambda^a_j) \\ -\im(\lambda^a_j)& 0}.$  If the condition $(a)$ of Theorem \ref{theo:stuctmain1} is met, then the update matrices are $$\Delta M=M\hat{X}_c\hat{M}\hat{X}_c^TM,\,\,\,\Delta K=M\hat{X}_c\hat{K}\hat{X}_c^T M$$ where $\hat{M}$ and $\hat{K}$ are solutions of equation (\ref{eq:basiceq2}) in which $X_c$ is replaced by $\hat{X}_c.$

Moreover setting $\hat{M}=\mbox{diag}(\alpha_1I_2,\dots ,\alpha_pI_2)$ $\alpha_1,\hdots,\alpha_p \in \R$ and $\hat{K}=\Lambda_c-\Lambda_a-\hat{M}\Lambda_a$ we obtain $\Delta M=\Delta M^T$ and $\Delta K=-\Delta K^T.$


\begin{remark}
If the spectral condition $(a)$ in Theorem \ref{theo:stuctmain1} is met, then the structured update matrices are given by $\Delta M=M\hat{X}_c \hat M\hat{X}_c^TM$ and $\Delta K=M\hat{X}_c \hat K\hat{X}_c^TM$ with \begin{equation}
\hat M=H_a\left[(\Lambda_a-\Lambda_c)\Lambda_a+Z_1+Z_2\Lambda_a \right], \,\, \,\, \hat K=H_a\left[(\Lambda_c-\Lambda_a)-Z_1\Lambda_a-Z_2 \Lambda_a^2 \right]
\end{equation} where $H_a=(I_{2p}-\Lambda_a^2)^{-1}$ and $Z_k=\mbox{diag}(Z^{(k)}_{11},\dots ,Z^{(k)}_{pp}),\,k=1,2$ with $Z^{(1)}_{jj}=\alpha_j I_2$ and $Z^{(2)}_{jj}=\bmatrix{0& \beta_j\\-\beta_j& 0},\, \alpha_j,\,\beta_j \in \R.$ Thus $L_\Delta(\lam)=\lam(M+\Delta M)+ (K+\Delta K)$ is a real $T$-odd pencil.

Moreover, if the matrices $Z_1$ and $Z_2$ are chosen such that $(\Lambda_a-\Lambda_c)\Lambda_a+Z_1+Z_2\Lambda_a$ is a diagonal matrix with non-negative diagonal entries then $\Delta M$ is a positive semi-definite matrix, that is $M+\Delta M>0.$
\end{remark}

Now we consider an example to obtain solution of {\bf (P1)} for undamped models $L(\lam)=\lam^2 M+K$ with $M>0$ and $K^*=-K$, by utilizing Remark \ref{*oddMK}. The values of $\lam^2$ to satisfy $\det(\lam^2M+K)=0$ are either zero or purely imaginary numbers (not necessary to have self conjugate pair), that is, either $\lam=\pm \sqrt{(a/2)}(1+i)$ or $\lam=\pm \sqrt{(a/2)}(1-i)$ for some $a\geq 0$. So, here we define the set $\mathcal{E}=\left\{\pm \sqrt{(a/2)} (1+i),\,\,\pm \sqrt{(a/2)} (1-i)\,\,:\,\,a \geq 0 \right\}.$ Then the eigenvalues of the matrix pencil $L(\lam) = \lam^2 M +K$ occur in pair $(\lam, -\lam)$ corresponding to an eigenvector $x\in \C^n$ for some $\lam \in \mathcal{E}$.  

Let $(\pm \lam^c_i,x^c_i),\,i = 1,\dots,p$ denote the eigenpairs of
$L(\lam)=\lam^2 M+K$ and $\pm \lam_i^c$ are to be changed to the aimed eigenvalues $\pm \lam^a_i,\,i = 1,\dots,p$ of $L_{\triangle}(\lam) = \lam^ 2 (M + \triangle M) + (K + \triangle K)$, for some positive semi-definite matrix $\triangle M$ and $\triangle K =-(\Delta K)^*$ without spillover effect. 
Setting $\Lambda_c=\mbox{diag}\left(\lam^c_1,\lam^c_2,\,\dots\,,\lam^c_p\right)^2,\,\Lambda_a=\mbox{diag}\left(\lam^a_1,\lam^a_2,\,\dots\,,\lam^a_p\right)^2, \, \Lambda_f=\mbox{diag}\left(\lam^f_{p+1},\lam^f_{p+2},\,\dots\,,\lam^f_{n}\right)^2$ and $X_c=\left[x^c_1\,x^c_2\,\dots\,x^c_p\right],$ the MUP with no spillover effect for $L(\lam)$ translates to the problem {\bf{(P1)}}.

We consider the following example.

\begin{example}
Suppose $L(\lam)=\lam^2 M+K$ with \begin{center}
$M=\footnotesize \bmatrix{
7.73863+ 0.00000i &  -1.98637 - 4.01069i&   4.09960 - 3.39198i&  -0.13418 + 2.89422i \\
  -1.98637 + 4.01069i&   6.55893+ 0.00000i &   1.90812 + 3.90598i&  -2.03549 + 1.81182i \\
   4.09960 + 3.39198i&   1.90812 - 3.90598i&   6.65654+ 0.00000i &   1.02186 + 1.42954i \\
  -0.13418 - 2.89422i&  -2.03549 - 1.81182i&   1.02186 - 1.42954i&   6.46526+ 0.00000i } >0,$\\ $K=\small \bmatrix{0.00000 + 3.90061i& 2.0140 - 0.30415i& 1.34863 + 1.79442i& 0.05369 - 1.38714i\\
-2.0140 - 0.30415i& 0.00000 - 2.49371i& 0.30279 + 1.11588i& 0.35925 - 1.54051i \\
-1.34863 + 1.79442i& -0.30279 + 1.11588i& 0.00000 - 0.49211i& -0.97818 - 1.32790i \\
-0.05369 - 1.38714i& -0.35925 - 1.54051i& 0.97818 - 1.32790i& 0.00000 + 1.85364i}.$
\end{center}
Let $\lam^c_1=1.30078(1+i),\,\lam^c_2=0.80933(1-i)$ and $\lam^a_1=0.82134(1-i),\,\lam^a_2=0.56214(1+i).$ Thus we want to replace the eigenvalues $\lam^c_1,\,-\lam^c_1,\,\lam^c_2,\,-\lam^c_2$ of $L(\lam)$ by the desired eigenvalues $\lam^a_1,\,-\lam^a_1,\,\lam^a_2,\,-\lam^a_2$ respectively. So we form 
$\Lambda_c=\mbox{diag}\left(3.3841i,\,- 1.3100i\right),\,\Lambda_a=\mbox{diag}\left(- 1.3492i,\,0.6320i\right)$ and \begin{center}
$X_c=\bmatrix{0.776569 - 0.000000i&   0.617954 - 0.000000i \\
   0.747129 - 0.098152i&   0.153552 + 0.005888i \\
  -0.714782 - 0.126987i&  -0.229136 - 0.266691i \\
   0.444742 + 0.301815i&   0.038972 + 0.500083i}.$ \end{center} 

Therefore by setting $Z_1=\mbox{diag}\left(8.9752,\,2.5715\right)$ and $Z_2=\mbox{diag}\left(-0.00717i,\,-0.60271i\right)$ we obtain \begin{center}
$\footnotesize \triangle M= \bmatrix{
2.91691 - 0.00000i&  -1.34898 + 0.69543i&   0.58908 - 1.38017i&  -2.65147 - 0.99875i \\
  -1.34898 - 0.69543i&   1.59117 + 0.00000i&  -0.77640 + 0.88115i&   1.14417 + 1.21855i \\
   0.58908 + 1.38017i&  -0.77640 - 0.88115i&   0.99350 + 0.00000i&  -0.03741 - 1.55808i \\
  -2.65147 + 0.99875i&   1.14417 - 1.21855i&  -0.03741 + 1.55808i&   2.80188 + 0.00000i} \geq 0,$ \\$\footnotesize\triangle K= \bmatrix{
0.00000 - 5.25520i&  -0.87564 + 0.59483i&  -1.14421 - 1.67866i&  -1.92869 + 4.08890i \\
   0.87564 + 0.59483i&   0.00000 + 6.19427i&  -2.65507 - 1.39903i & -1.45840 + 0.46343i \\
   1.14421 - 1.67866i&   2.65507 - 1.39903i&   0.00000 + 0.98530i&  -0.69246 + 1.08993i \\
   1.92869 + 4.08890i&   1.45840 + 0.46343i&   0.69246 + 1.08993i&   0.00000 - 3.49173i }.$  
\end{center}    

Taking $\Lambda_f=\mbox{diag}(-0.28296i,\,0.42255i)$ and $X_f=\bmatrix{
0.196502 + 0.024767i&  -0.048688 + 0.190081i \\
  -0.036828 + 0.054982i&   0.095288 - 0.254723i \\
  -0.150920 + 0.086267i&   0.466261 + 0.000000i \\
   0.231864 + 0.000000i&   0.099775 + 0.083410i}$ we obtain
$\|(M+\triangle M)X_f \Lambda_f +(K+\triangle K)X_f\|_{F}=1.2209\times 10^{-14}$ which shows that the no spillover for the unmeasured spectral data is guaranteed. 

Thus we conclude that eigenvalues of $L_{\triangle}(\lam)=\lam^2(M+\triangle M)+(K+\triangle K)$ are $\lam^a_1,\,-\lam^a_1,\,\lam^a_2,\,-\lam^a_2.$ Hence eigenvalues of $L(\lam)$ are replaced by the desired eigenvalues with maintaining no spillover effect.
\end{example}

\subsection{The $\star$-even matrix pencils with positive definite $K$}
Let $L(\lambda)=\lambda M+K$ be a $*$-even matrix pencil with $K>0$.
Then all eigenvalues of $L(\lambda)$ are purely imaginary and there exists a basis 
$x_1^c,\ldots,x_p^c,\, x_{p+1}^f,\ldots , x_{n}^f$
of eigenvectors such that $L(\lambda_i^c)x^c_i=L(\lambda_i^f)x^f_i=0$.
By normalizing the eigenvectors, we may assume
that $(x_i^c)^* Kx^c_j=0$ for $i\not=j$ and $(x_i^c)^* Kx_i^c=1.$ Thus, the $M$-Gramian
of the matrix
$X_c=[x^c_1 \;\ldots\; x^c_p]$ satisfies $G=X_c^*MX_c=-\Lambda_c^{-1}$ as $X_c^*KX_c=I_p,$ where $\Lambda_c={\rm diag}(\lambda^c_i)$, $\Lambda_f={\rm diag}(\lambda^f_i)$, and $\lam_i^c\neq 0$.
Assuming the spectral condition $(a)$ as given in Theorem \ref{theo:stuctmain1} we have
$$\{\lam_1^c, \hdots, \lam_p^c\}\cap \{\lam_{p+1}^f, \hdots, \lam_{n}^f\}=\emptyset$$
fulfilling which the update matrices in Theorem \ref{theo:stuctmain1}
are \begin{equation} \label{DelMDelK_*even}
\Delta M=KX_c\,\hat M X_c^*K, \qquad
\Delta K=KX_c\,(\Lambda_c^{-1}(\Lambda_a-\Lambda_c)-\hat M\Lambda_a) X_c^*K.\end{equation}
Thus $\Delta M=-(\Delta M)^*,\,\Delta K=(\Delta K)^*$ when $\hat M$ and $\Lambda_a$ are diagonal matrices with purely imaginary diagonal entries. 


\begin{remark}
If $\hat M=0$ in $(\ref{DelMDelK_*even}),$ then we obtain $\Delta M=0$ and $\Delta K=KX_c\Lambda_c^{-1}(\Lambda_a-\Lambda_c)X_c^*K$ is a Hermitian matrix. On the other hand, assuming $\Lambda_a$ as nonsingular and setting $\hat M=\Lambda_c^{-1}-\Lambda_a^{-1}$, we obtain $\Delta K=0$ and $\Delta M=KX_c(\Lambda_c^{-1}-\Lambda_a^{-1})X_c^*K$ is skew-Hermitian.
\end{remark}
\begin{remark} \label{*evenMK}
If $\{\lam_1^c, \hdots, \lam_p^c\}\cap \{\lam_{p+1}^f, \hdots, \lam_{n}^f\}=\emptyset$ and $\Lambda_a$ is a diagonal matrix with purely imaginary diagonal entries then the structured update matrices in Theorem \ref{theo:stuctmain1}, are given by $\Delta M=KX_c \hat MX_c^*K$ and $\Delta K=KX_c \hat KX_c^*K$ with \begin{equation}
\hat M=H_a\left[\Lambda_c^{-1}(\Lambda_c-\Lambda_a)\Lambda_a+Z_1+Z_2\Lambda_a \right], \,\, \,\, \hat K=H_a\left[\Lambda_c^{-1}(\Lambda_a-\Lambda_c)-Z_1\Lambda_a-Z_2 \Lambda_a^2 \right]
\end{equation} where $H_a=(I_p-\Lambda_a^2)^{-1}$ and $Z_1$ is an arbitrary imaginary diagonal matrix, while $Z_2$ is an arbitrary real diagonal matrix.  

Moreover, if the diagonal matrices $Z_1$ and $Z_2$ are chosen such that $\Lambda_c^{-1}(\Lambda_a-\Lambda_c)-Z_1\Lambda_a-Z_2 \Lambda_a^2$ is a diagonal matrix with non-negative diagonal entries then $\Delta K$ is a positive semi-definite matrix, that is $K+\Delta K>0$.
\end{remark}

Now we consider $T$-even pencils $L(\lam)=\lam M+K\in\R^{n\times n}[\lam]$ where $K>0.$ The structured updates for $L(\lam)$ can be obtained following a similar procedure as described for the case of $T$-odd matrix pencils. Indeed, observe that nonzero complex eigenvalues of $L(\lam)$ are purely imaginary. Let $(\lam_i^c, x_i^c), (\overline{\lam_i^c}, \overline{x_i^c}), 1\leq i\leq p$ be eigenpairs of $L(\lam)$ where the eigenvectors are normalized and $\lam_i^c \neq 0$. Then it may be assumed that $\hat{X}_c^TK\hat{X}_c=I_{2p}$ where $\hat{X}_c=[\re(x_1^c) \, \im(x_2^c) \, \hdots \, \re(x_p^c) \, \im(x_p^c)].$

Then the $M$-Gramian of the matrix $\hat{X}_c$ satisfies $G=\hat{X}^T_cM\hat{X}_c=-\Lambda_c^{-1}$ where $\Lambda_c=\mbox{diag}(\Lambda^c_1,\dots, \Lambda^c_p)$ and $\Lambda_a=\mbox{diag}(\Lambda^a_1,\dots, \Lambda^a_p)$ with $\Lambda^c_j=\bmatrix{0& \im(\lambda^c_j) \\ -\im(\lambda^c_j)& 0}$ and $\Lambda^a_j=\bmatrix{0& \im(\lambda^a_j) \\ -\im(\lambda^a_j)& 0}.$ If the condition $(a)$ of Theorem \ref{theo:stuctmain1} is met, then the update matrices are $$\Delta M=K\hat{X}_c\hat{M}\hat{X}_c^TK,\,\,\,\Delta K=K\hat{X}_c\hat{K}\hat{X}_c^T K$$ where $\hat{M}$ and $\hat{K}$ are solutions of equation (\ref{eq:basiceq2}) in which $X_c$ is replaced by $\hat{X}_c.$

It may also be noted that choosing $\hat{M}=\mbox{diag}(\hat{M}_{11},\dots ,\hat{M}_{pp})$ and $\hat{K}=\Lam_c^{-1}(\Lam_a-\Lam_c) - \hat{M}\Lam_a$ where $\hat{M}_{jj}=\bmatrix{0& \alpha_j \\ -\alpha_j& 0}$ for some $\alpha_1,\hdots, \alpha_p \in \R$, we obtain $\Delta M=-\Delta M^T$ and $\Delta K=\Delta K^T.$ 

\begin{remark}
If the spectral condition $(a)$ in Theorem \ref{theo:stuctmain1} is met, then the structured updates matrices are given by $\Delta M=K\hat{X}_c \hat M\hat{X}_c^TK$ and $\Delta K=K\hat{X}_c \hat K\hat{X}_c^TK$ with \begin{equation}
\hat M=H_a\left[\Lambda_c^{-1}(\Lambda_c-\Lambda_a)\Lambda_a+Z_1+Z_2\Lambda_a \right], \,\, \,\, \hat K=H_a\left[\Lambda_c^{-1}(\Lambda_a-\Lambda_c)-Z_1\Lambda_a-Z_2 \Lambda_a^2 \right]
\end{equation} where $H_a=(I_{2p}-\Lambda_a^2)^{-1}$ and $Z_k=\mbox{diag}(Z^{(k)}_{1},\dots ,Z^{(k)}_{p}),\,k=1,2$ having $Z^{(1)}_{j}=\bmatrix{0& \alpha_j \\ -\alpha_j& 0}$ and $Z^{(2)}_{j}=\beta_j I_2,\, \alpha_j,\,\beta_j \in \R.$ Obviously, $L_\Delta(\lam)=\lam(M+\Delta M)+(K+\Delta K)$ is a $T$-even real matrix pencil with $K+\Delta K>0,$ if $Z_1,\,Z_2$ are chosen such that $\Lambda_c^{-1}(\Lambda_a-\Lambda_c)-Z_1\Lambda_a-Z_2 \Lambda_a^2$ is a diagonal matrix with non-negative diagonal entries.
\end{remark}

Now we consider an example to obtain solution of {\bf (P1)} for undamped models $L(\lam)=\lam^2 M+K$ with $K>0$ by utilizing Remark \ref{*evenMK}. The values of $\lam^2$ to satisfy $\det(\lam^2M+K)=0$ are purely imaginary numbers (not necessary to have self conjugate pair), that is, $\lambda \in \mathcal{E} {\small \setminus} \{0\}.$ Then the eigenvalues of the matrix pencil $L(\lam) = \lam^2 M +K$ occur in pair $(\lam, -\lam)$ corresponding to an eigenvector $x\in \C^n$ for some $\lam \in \mathcal{E} {\small \setminus} \{0\}.$ 

Let $(\pm \lam^c_i,x^c_i),\,i = 1,\dots,p$ denote the eigenpairs of
$L(\lam)=\lam^2 M+K$ and $\pm \lam_i^c$ are to be changed to the aimed eigenvalues $\pm \lam^a_i,\,i = 1,\dots,p$ of $L_{\triangle}(\lam) = \lam^ 2 (M + \triangle M) + (K + \triangle K)$, for some positive semi-definite matrix $\triangle K$ and skew-Hermitian $\triangle M$ with no spillover effect. 
Setting $\Lambda_c=\mbox{diag}\left(\lam^c_1,\lam^c_2,\,\dots\,,\lam^c_p\right)^2,\,\Lambda_a=\mbox{diag}\left(\lam^a_1,\lam^a_2,\,\dots\,,\lam^a_p\right)^2, \, \Lambda_f=\mbox{diag}\left(\lam^f_{p+1},\lam^f_{p+2},\,\dots\,,\lam^f_{n}\right)^2$ and $X_c=\left[x^c_1\,x^c_2\,\dots\,x^c_p\right],$ the MUP with no spillover effect for $L(\lam)$ translates to the problem {\bf{(P1)}}. We consider the following example.

\begin{example}
Suppose $L(\lam)=\lam^2 M+K$ with \begin{center}
$M= \small \bmatrix{
0.00000 + 0.20972i&  -0.10697 + 0.96717i&   0.04080 - 0.91135i & -3.59068 + 1.77061i \\
   0.10697 + 0.96717i&   0.00000 - 0.94422i&  -0.98779 + 1.35265i&   3.55621 - 0.03449i \\
  -0.04080 - 0.91135i&   0.98779 + 1.35265i&   0.00000 - 0.79806i&  -0.50440 - 0.71953i \\
   3.59068 + 1.77061i&  -3.55621 - 0.03449i&   0.50440 - 0.71953i&   0.00000 - 1.82468i },$\\ $K=\footnotesize \bmatrix{5.25927 +  0.00000i&   -1.36185 -  0.39225i&   -1.02993 +  3.85132i&    3.10502 +  0.94912i \\
   -1.36185 +  0.39225i&    5.18883 +  0.00000i&    0.25646 +  2.08573i&    2.82543 -  1.42028i \\
   -1.02993 -  3.85132i&    0.25646 -  2.08573i&   12.57576 +  0.00000i&   -0.35504 -  4.89141i \\
    3.10502 -  0.94912i&    2.82543 +  1.42028i&   -0.35504 +  4.89141i&    9.24337 +  0.00000i}>0.$
\end{center}
Let $\lam^c_1=1.8663(1+i),\,\lam^c_2=0.96032(1+i)$ and $\lam^a_1=1.9538(1+i),\,\lam^a_2=1.1696(1+i).$ Thus we want to replace the eigenvalues $\lam^c_1,\,-\lam^c_1,\,\lam^c_2,\,-\lam^c_2$ of $L(\lam)$ by the desired eigenvalues $\lam^a_1,\,-\lam^a_1,\,\lam^a_2,\,-\lam^a_2$ respectively. So we form 
$\Lambda_c=\mbox{diag}\left(6.96617i,\, 1.84442i\right),\,\Lambda_a=\mbox{diag}\left(7.63484i,\, 2.73573i\right)$ and \begin{center}
$X_c=\bmatrix{0.269248 - 0.049496i&   0.365254 + 0.000000i \\
   0.360869 + 0.000000i&   0.021572 + 0.085644i \\
   0.105515 - 0.042953i&   0.074614 + 0.141519i \\
  -0.030283 + 0.036643i&   0.024397 - 0.220546i}.$ \end{center} 

Therefore by setting $Z_1=\mbox{diag}\left( 0.10025i,\,0.47934i\right)$ and $Z_2=\mbox{diag}\left(0.26054,\,0.84128\right)$ we obtain \begin{center}
$\footnotesize \triangle M= \bmatrix{0.00000 + 0.52241i&  -0.06183 - 0.22791i&   0.00122 - 0.11173i&  -0.41289 + 0.39407i \\
   0.06183 - 0.22791i&  0.00000 + 0.20921i&  -0.13366 + 0.07568i&   0.28312 - 0.05364i \\
  -0.00122 - 0.11173i&   0.13366 + 0.07568i&  0.00000 + 0.17138i&   0.18351 - 0.13284i \\
   0.41289 + 0.39407i&  -0.28312 - 0.05364i&  -0.18351 - 0.13284i&  0.00000 + 0.70160i
},$ \\$\footnotesize\triangle K= \bmatrix{
3.00449 + 0.00000i&  -1.05675 + 0.30905i&  -0.52100 + 0.27793i&   2.41288 + 2.20342i \\
  -1.05675 - 0.30905i&   1.57244 + 0.00000i&   0.52079 + 1.32381i&   0.16989 - 1.66602i \\
  -0.52100 - 0.27793i&   0.52079 - 1.32381i&   1.79857 + 0.00000i&  -0.75754 - 1.70191i \\
   2.41288 - 2.20342i&   0.16989 + 1.66602i&  -0.75754 + 1.70191i&   4.44366 + 0.00000i }\geq 0.$  
\end{center}    

Taking $\Lambda_f=\mbox{diag}(- 5.38777i,\,- 0.38831i)$ and $X_f=\bmatrix{
 -0.129984 - 0.085155i&   0.517601 + 0.000000i \\
  -0.078858 - 0.235812i&   0.286105 - 0.401595i \\
   0.290180 + 0.000000i&   0.036036 + 0.101571i \\
   0.076878 - 0.090772i&  -0.397514 + 0.168257i}$ we obtain
$\|(M+\triangle M)X_f \Lambda_f +(K+\triangle K)X_f\|_{F}=1.8766\times 10^{-14}$ which shows that the no spillover for the unmeasured spectral data is guaranteed. 

Thus we conclude that eigenvalues of $L_{\triangle}(\lam)=\lam^2(M+\triangle M)+(K+\triangle K)$ are $\lam^a_1,\,-\lam^a_1,\,\lam^a_2,\,-\lam^a_2.$ Hence eigenvalues of $L(\lam)$ are replaced by the desired eigenvalues with maintaining no spillover effect.
\end{example}

\section{Updates for $\st$-skew-Hamiltonian/Hamiltonian pencils}
Recall that a matrix pencil $L(\lam)=\lam M+K \in \C^{2n \times 2n}[\lambda]$ is said to be $\st$-skew-Hamiltonian/Hamiltonian (SHH) pencil if $M$ is a $\st$-skew-Hamiltonian matrix and $K$ is a $\st$-Hamiltonian matrix, that is $JM=-(JM)^\st$ and $JK=(JK)^\st$ where $J=\bmatrix{0& I_n \\ -I_n& 0}$ and $\st \in \{*,T\}$ \cite{mehl2000condensed,lin1999canonical}. It is also clear that if $L(\lam)$ is  $\st$-skew-Hamiltonian/Hamiltonian then $JL(\lam)$ is $\st$-even. It is also well-known that if $\lam$ is a simple eigenvalue of $L(\lam)$ with $\re(\lam)\neq 0$ then so is $-\overline{\lam}$, however a purely imaginary eigenvalue need not occur in pairs \cite{benner2002numerical}. Besides, $\lam$ and $-\overline{\lam}$ have the same partial multiplicities \cite{mehl2000condensed}. Our next proposition is about the solution of the problem \textbf{(P2)} for $\st$-SHH pencil $L(\lambda).$ 

\begin{proposition}\label{theo:stuctmain2}
Let $(X_c,\Lambda_c)$ and $(X_f,\Lambda_f)$ be complementary deflating pairs of the pencil $L(\lambda)=\lambda\, M+K$, where $\Lambda_c\in\C^{p\times p}$. 
Suppose that
\begin{center}
$(a)\;$ 
$\sigma(\Lambda_c)\cap 
\sigma(-\Lambda_f^\st)=\emptyset$ \quad and
\quad $(b)\;$ $G:=X_c^\st JMX_c$ is nonsingular.
\end{center}
Let $\Lambda_a,\hat M,\hat K\in\C^{p\times p}$ be such that
\begin{equation}\label{eq:basiceq22}
\hat M \Lambda_a+\hat K=G(\Lambda_c-\Lambda_a).
\end{equation}
Set
$$\Delta M:=J^\st U\hat MU^\st,\qquad \Delta K:=J^\st U\hat K U^\st,
\qquad \text{where }\quad U:=JMX_cG^{-1}.$$
Then $(X_c,\Lambda_a)$ and $(X_f,\Lambda_f)$ are complementary deflating pairs of the pencil $L_\Delta(\lambda)=(M+\Delta M)\, \lambda+(K+\Delta K)$. Furthermore, 
$L_\Delta(\lambda)$ is a SHH pencil whenever 
$\lambda\,\hat M+\hat K   \in \L_p{(\st,-1,1)} $. The latter holds if and only if 
$\lambda\, \hat M+(\hat M+G)\Lambda_a\in
 \L_p{(\st,-1,1)}$.
\end{proposition}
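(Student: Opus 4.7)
The plan is to reduce everything to Theorem~\ref{theo:stuctmain1} applied to the $\st$-even pencil $JL(\lambda)=\lambda(JM)+JK\in \L_{2n}(\st,-1,1)$, exploiting the identity $J^\st=-J=J^{-1}$. Since $J$ is invertible, $MX\Lambda+KX=0$ is equivalent to $(JM)X\Lambda+(JK)X=0$, so $(X_c,\Lambda_c)$ and $(X_f,\Lambda_f)$ are complementary deflating pairs of $JL(\lambda)$ as well. The hypothesis $(a)$ is exactly the spectral condition required by Theorem~\ref{theo:stuctmain1} (with $\epsilon_1\epsilon_2=-1$), and $G=X_c^\st(JM)X_c$ is the associated Gramian, which is nonsingular by $(b)$.

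Applying Theorem~\ref{theo:stuctmain1} to $JL(\lambda)$ with the same $\Lambda_a$ and any $(\hat M,\hat K)$ satisfying \eqref{eq:basiceq22} produces the update $(JL)_\Delta(\lambda)=(JM+U\hat MU^\st)\lambda+(JK+U\hat KU^\st)$, where $U=(JM)X_cG^{-1}$, and certifies that $(X_c,\Lambda_a)$, $(X_f,\Lambda_f)$ are complementary deflating pairs of $(JL)_\Delta$. Left-multiplication by $J^\st$ converts this into $L_\Delta(\lambda)=(M+\Delta M)\lambda+(K+\Delta K)$ with precisely $\Delta M=J^\st U\hat MU^\st$ and $\Delta K=J^\st U\hat KU^\st$, and the complementary deflating pair property transfers back since $J$ is invertible.

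For the structural claim, I note $L_\Delta$ is $\st$-SHH if and only if $JL_\Delta=(JL)_\Delta\in \L_{2n}(\st,-1,1)$, which by Theorem~\ref{theo:stuctmain1} holds precisely when $\lambda\hat M+\hat K\in \L_p(\st,-1,1)$. For the final equivalence, I would substitute $\hat K=G\Lambda_c-(G+\hat M)\Lambda_a$ from \eqref{eq:basiceq22}, and use $G^\st=-G$ together with the identity $G\Lambda_c=-\Lambda_c^\st G$ (Proposition~\ref{orthprops}(iii) applied to $JL$) to deduce $(G\Lambda_c)^\st=G\Lambda_c$. Assuming $\hat M^\st=-\hat M$, a short computation then shows that $\hat K^\st=\hat K$ is equivalent to $(\hat M+G)\Lambda_a+\Lambda_a^\st(\hat M+G)=0$, which in turn is exactly the Hermiticity of $(\hat M+G)\Lambda_a$, i.e.\ $\lambda\hat M+(\hat M+G)\Lambda_a\in \L_p(\st,-1,1)$. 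The only genuine obstacle is this last equivalence, where one must carefully track the signs induced by $G^\st=-G$ and verify that the $G\Lambda_c$ contribution to $\hat K^\st-\hat K$ cancels via Proposition~\ref{orthprops}(iii); everything else is direct translation of the $\st$-even result through conjugation by $J$.
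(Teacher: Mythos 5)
Your proposal is correct and follows essentially the same route as the paper, whose proof of this proposition consists of the single line that it ``follows easily from Theorem \ref{theo:stuctmain1}'' --- the intended reduction being exactly your conjugation by $J$ of the $\st$-even pencil $JL(\lambda)\in\L_{2n}(\st,-1,1)$, using $J^\st=J^{-1}$. Your verification of the final equivalence (via $(G\Lambda_c)^\st=G\Lambda_c$ from $G^\st=-G$ and Proposition \ref{orthprops}(iii)) is a correct filling-in of a step the paper leaves implicit in Theorem \ref{theo:stuctmain1} as ``obvious.''
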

\proof
The proof follows easily from Theorem \ref{theo:stuctmain1}. 
\eproof


The next result is about the solution of the problem $\textbf{(P1)}$ for $*$-SHH matrix pencil. 

\begin{corollary} \label{thm:Eig_SHH}
Suppose $L(\lam)=\lam M+K$ is a $*$-SHH matrix pencil. Let $(\Lam_c,X_c)$ be a deflating pair of $L(\lam)$ where $\Lambda_c=\mbox{diag}(\lambda^c_1,-\overline{\lam^c_1},\dots,\lambda^c_m,-\overline{\lambda^c_m},\lambda^c_{m+1},\dots,\lambda^c_p),$ $\re(\lam_j^c)\neq 0, j=1,\hdots, m$ and $\lam_k^c, k=m+1,\hdots,p$ are purely imaginary numbers.  Let $\Lambda_a=\mbox{diag}(\lambda^a_1,-\overline{\lam^a_1},\dots,\lambda^a_m,-\overline{\lambda^a_m},\lambda^a_{m+1},\dots,\lambda^a_p)$ where $\re(\lambda^a_j)\neq 0$ and $\lambda^a_k$ are purely imaginary, $j=1,\hdots,m,$ $k=m+1,\hdots,p.$ 

Then by Proposition \ref{theo:stuctmain2}, if it satisfies the conditions $(a),\,(b)$ and $\lam_i^c$s are simple eigenvalues then the update matrices are $\Delta M=J^*U\hat MU^*,\,\, \Delta K=J^*U(G\Lambda_c-(G+\hat{M})\Lambda_a) U^*$ for which $(X_c, \Lam_a), (X_f, \Lam_f)$ are complementary deflating pairs of $L_\Delta(\lam),$ where $\hat{M}$ is an arbitrary matrix of compatible size. This solves problem {\bf (P1)} by unstructured updates.

Further, on choosing $\hat{M}=\mbox{diag}(\hat{M}_{1},\dots ,\hat{M}_{m},\hat{m}_{m+1},\dots, \hat{m}_{p})$ where $\hat{M}_j=\bmatrix{0 & \alpha_j \\ -\overline{\alpha_j} & 0}$, $\re(\alpha_j)\neq 0,$ and $\hat{m}_k$ are purely imaginary numbers, $L_\Delta(\lam)$ becomes $*$-SHH pencil which solves the problem {\bf (P1)} using structured updates.
\end{corollary}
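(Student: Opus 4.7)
The plan is to specialize Proposition \ref{theo:stuctmain2} to the $*$-SHH setting and then verify that the prescribed block structure on $\hat M$ forces $L_\Delta(\lambda)$ to remain $*$-SHH.

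First I would check the hypotheses $(a)$ and $(b)$ of Proposition \ref{theo:stuctmain2}. Because $L(\lambda)$ is $*$-SHH, its spectrum is invariant under $\lambda \mapsto -\bar\lambda$, and the paired ordering of $\Lambda_c$ together with the inclusion of all purely imaginary targeted eigenvalues ensures $\sigma(\Lambda_c)$ is closed under this involution; by complementarity the symmetry then passes to $\Lambda_f$, so $\sigma(-\Lambda_f^*) = \sigma(\Lambda_f)$, which is disjoint from $\sigma(\Lambda_c)$ under simplicity of the $\lambda_i^c$. Condition $(b)$ is an explicit assumption of the corollary. Solving $\hat M \Lambda_a + \hat K = G(\Lambda_c - \Lambda_a)$ for arbitrary $\hat M$ yields $\hat K = G\Lambda_c - (G + \hat M)\Lambda_a$, and substituting into Proposition \ref{theo:stuctmain2} produces the stated $(\Delta M, \Delta K)$, which settles the unstructured claim.

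For the structured claim, Proposition \ref{theo:stuctmain2} reduces the task to showing $\lambda\hat M + (\hat M + G)\Lambda_a \in \L_p(*, -1, 1)$, that is, $\hat M^* = -\hat M$ together with $(\hat M + G)\Lambda_a$ Hermitian. The skew-Hermiticity of $\hat M$ follows by inspection from the chosen block structure. The key step will be to determine the structure of $G = X_c^* JM X_c$: from $(JM)^* = -JM$ we have $G^* = -G$, and multiplying $MX_c\Lambda_c + KX_c = 0$ on the left by $X_c^* J$ together with $(JK)^* = JK$ yields the Sylvester-type identity $G\Lambda_c + \Lambda_c^* G = 0$. Since $\Lambda_c$ is diagonal, this forces $G_{ij} = 0$ unless $(\Lambda_c)_{jj} = -\overline{(\Lambda_c)_{ii}}$, and simplicity of the eigenvalues together with the paired ordering will pin down $G$ to inherit exactly the same block-antidiagonal-plus-purely-imaginary-diagonal pattern as $\hat M$.

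Finally I would close with a block-by-block computation showing $(\hat M + G)\Lambda_a$ is Hermitian. On each $2\times 2$ pair block, $\bmatrix{0 & c_j \\ -\bar c_j & 0}\bmatrix{\lambda_j^a & 0 \\ 0 & -\bar\lambda_j^a} = \bmatrix{0 & -c_j\bar\lambda_j^a \\ -\bar c_j\lambda_j^a & 0}$ is manifestly self-conjugate-transpose; on each purely imaginary scalar coordinate the product of two purely imaginary numbers is real. Hence $\hat K^* = \hat K$ and $L_\Delta(\lambda)$ is $*$-SHH. The main obstacle is the structural determination of $G$: without simplicity and the paired ordering, off-diagonal couplings between distinct eigenspaces could survive and destroy the Hermiticity of $\hat K$, breaking the $*$-SHH property of $L_\Delta(\lambda)$.
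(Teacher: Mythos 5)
Your proposal is correct and follows essentially the same route as the paper: both arguments hinge on showing that simplicity of the $\lambda_i^c$ forces $G=X_c^*JMX_c$ into the same block pattern (antidiagonal $2\times 2$ blocks $\bigl[\begin{smallmatrix}0 & g_j\\ -\overline{g_j} & 0\end{smallmatrix}\bigr]$ plus purely imaginary scalars) as the prescribed $\hat M$, after which everything follows from Proposition \ref{theo:stuctmain2}. Your Sylvester-equation derivation $G\Lambda_c+\Lambda_c^*G=0$ combined with $G^*=-G$, and the explicit check that $(\hat M+G)\Lambda_a$ is Hermitian blockwise, simply supply the details the paper leaves implicit.
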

\proof
Since $\lam_i^c$s are simple eigenvalues of $L(\lam),$ the matrix $G$ has the form $G=\mbox{diag}(G_{1},\dots,G_{m},g_{m+1},\dots ,g_{p}) $ where $G_{j}=\bmatrix{0& g_{j}\\-\overline{g}_{j}& 0}$ with $g_j\in \C,$ $1\leq j\leq m$ and $g_{k}, m+1\leq k\leq p$ are imaginary numbers. The rest follows from proposition \ref{theo:stuctmain2}. 
\eproof

Another parametric structured updates are given as follows.

\begin{remark} \label{remk:Eig_SHH}
If the assumptions of Corollary \ref{thm:Eig_SHH} holds then $\Delta M=J^TU\hat MU^*$ and $\Delta K=J^TU\hat{K} U^*$ solves the problem \textbf{(P1)}, where \beano
\hat{M} &=& G(\Lambda_c-\Lambda_a)H_a \Lambda_a^*+Z_1(I_p-\Lambda_aH_a\Lambda_a^*)-Z_2H_a\Lambda_a^*, \\
\hat{K} &=& G(\Lambda_c-\Lambda_a)H_a-Z_1\Lambda_aH_a+Z_2(I_p-H_a)
\eeano
with $H_a=(\Lambda_a^*\Lambda_a+I_p)^{-1},\,G=X_c^*JMX_c,\,U=JMX_cG^{-1}$ and $Z_i=\mbox{diag}(Z^{(i)}_{1},\dots,Z^{(i)}_{m},z^{(i)}_{m+1},\dots ,z^{(i)}_{p}),$ $i=1,2,$ 
$$Z_j^{(1)}=\bmatrix{0 & \alpha_{j} \\ -\overline{\alpha}_{j} & 0}, \, Z_j^{(2)}=\bmatrix{0 & \beta_{j} \\ \overline{\beta}_{j} & 0},\,\alpha_j,\,\beta_j\in \C,\, 1\leq j\leq m$$ and $z_k^{(1)}, m+1\leq k\leq p$ are imaginary numbers and $z_k^{(2)}$ are reals. Besides $\Delta M, \Delta K$ are $*$-skew-Hamiltonian and $*$-Hamiltonian matrix respectively. 
\end{remark}

Now we consider $T$-SHH matrix pencils $L(\lam)=\lam M + K\in \R^{2n\times 2n}[\lam].$ Note that for an eigenvalue $\lam$ of $L(\lam)$ with $\re(\lam)\neq 0\neq \im(\lam),$ $\overline{\lam}, -\overline{\lam}, -\lam$ are also eigenvalues of $L(\lam).$ Moreover, if $x$ and $\hat{x}$ are eigenvectors corresponding to $\lam, -\overline{\lam}$ respectively, then $\overline{x}$ and $\overline{\hat{x}}$ are eigenvectors corresponding to $\overline{\lam}$ and $-\lam$ respectively. If $\im(\lam)=0$ then $\lam, -\lam$ form a pair of eigenvalues of $L(\lam),$ whereas if $\re(\lam)=0$ then $\lam, \overline{\lam}$ are eigenvalues in pairs. Thus for real structured updates of $L(\lam)$ the eigenvalues are to be replaced as tuples depending on the real and imaginary parts of the eigenvalues. Thus we assume that the quadruple of eigenvalues $(\lam_j^c, \overline{\lam_j^c}, -\overline{\lam_j^c}, -\lam_j^c)$ of $L(\lam)$ is to be changed by a quadruple $(\lam_j^a, \overline{\lam_j^a}, -\overline{\lam_j^a}, -\lam_j^a)$ when both the real and imaginary parts of $\lam_j^c$ and $\lam_j^a$ are non zero, where $1\leq j\leq m_1.$ The pair of eigenvalues $(\lam_k^c, \overline{\lam_k^c})$ is to be changed by a pair $(\lam_k^a, \overline{\lam_k^a})$ when the real parts of $\lam_k^c, \lam_k^a$ are zero, $m_1+1 \leq k\leq m_2.$ Finally a pair of eigenvalues $(\lam_l^c, -\lam_l^c)$ of $L(\lam)$ is to be changed by a pair $(\lam_l^a, -\lam_l^a)$ when the imaginary parts of $\lam_l^c, \lam_l^a$ are zero, $m_2+1\leq k\leq p.$ Obviously, $2m_1+2p <n.$

Let  $$X_c = [X^c_1 \dots X^c_{m_1}\,X^c_{m_1+1}\dots X^c_{m_2}\,X^c_{m_2+1} \dots X^c_p]$$ where 
\beano X^c_j &=& [\re(x^c_j)\,\, \im(x^c_j)\,\, \re(\hat{x}^c_j) \,\, \im(\hat{x}^c_j)], \\ X^c_k &=& [\re(x^c_k)\,\, \im(x^c_k)],\\ X^c_l &=& [x^c_l\,\, \hat{x}^c_l],\eeano $x_j^c$ and $\hat{x}_j^c$ denote the eigenvectors corresponding to $\lam_j^c$ and $-\overline{\lam_j^c}$ respectively, and $x_k^c,$ $x_l^c$ and $\hat{x}_l^c$ denote the eigenvectors corresponding to the eigenvalues $\lam_k^c,$ $\lam_l^c$ and $-\lam_l^c$ respectively. 

Further, suppose 
\beano
\Lambda_c &=& \mbox{diag}(\Lambda^c_1,\dots ,\Lambda^c_{m_1},\Lambda^c_{m_1+1},\dots ,\Lambda^c_{m_2},\,\Lambda^c_{m_2+1},\dots, \Lambda^c_p) \\
\Lambda_a &=& \mbox{diag}(\Lambda^a_1,\dots ,\Lambda^a_{m_1},\Lambda^a_{m_1+1},\dots ,\Lambda^a_{m_2},\,\Lambda^a_{m_2+1},\dots, \Lambda^a_p)
\eeano
where \beano && \Lambda^c_j=\mbox{diag}(\hat{\Lambda}^c_{j},\,-(\hat{\Lambda}^c_{j})^T),\,\Lambda^c_k=\bmatrix{0& \im(\lambda^c_k)\\ - \im(\lambda^c_k)& 0},\,\Lambda^c_l=\mbox{diag}(\lambda^c_l,\,-\lambda^c_l),\\ 
&& \Lambda^a_j=\mbox{diag}(\hat{\Lambda}^a_{j},\,-(\hat{\Lambda}^a_{j})^T),\,\Lambda^a_k=\bmatrix{0& \im(\lambda^a_k)\\ - \im(\lambda^a_k)& 0},\,\Lambda^a_l=\mbox{diag}(\lambda^a_l,\,-\lambda^a_l)\eeano and $\hat{\Lambda}^c_{j}=\bmatrix{\re(\lambda^c_j)& \im(\lambda^c_j) \\ - \im(\lambda^c_j)& \re(\lambda^c_j)},\,\hat{\Lambda}^a_{j}=\bmatrix{\re(\lambda^a_j)& \im(\lambda^a_j) \\- \im(\lambda^a_j)& \re(\lambda^a_j)},\,j=1, \hdots, m_1,\,k=m_1+1,\hdots,m_2,\,l=m_2+1,\hdots,p.$

Then we have the following theorem.

\begin{theorem} \label{thm:Eig_TSHH}
Let $(X_c, \Lam_c)$ be the eigenpair matrix of the $T$-SHH matrix pencil $L(\lam)=\lam M+K$ as described above. Then by Proposition \ref{theo:stuctmain2}, if it satisfies the conditions $(a),\,(b)$ and all the to be changed eigenvalues are distinct then the update matrices are $\Delta M=J^TU\hat MU^T,\,\, \Delta K=J^TU(G\Lambda_c-(G+\hat{M})\Lambda_a) U^T,$ where $\hat{M}$ is an arbitrary matrix of compatible size.

In addition, choosing $\hat{M}=\mbox{diag}(\hat{M}_{1},\dots ,\hat{M}_{m_1},\hat{M}_{m_1+1},\dots, \hat{M}_{m_2},\hat{M}_{m_2+1} ,\dots ,\hat{M}_{p})$ we obtain $T$-skew-Hamiltonian $\Delta M$ and $T$-Hamiltonian $\Delta K$ which solves the problem \textbf{(P1)} where $$\hat{M}_{j}=\bmatrix{{\bf 0} & \alpha_j I_2+\beta_j J_2\\ -\alpha_j I_2+\beta_j J_2& {\bf 0}},\,\hat{M}_{k}=\beta_kJ_2,\,\hat{M}_{l}=\beta_lJ_2,$$ ${\bf 0}$ is the zero matrix, $J_2=\bmatrix{0& 1\\-1& 0},\,\alpha_j,\,\beta_j,\,\beta_k,\,\beta_l$ are arbitrary real numbers and $j=1,\hdots, m_1,\,k=m_1+1,\hdots, m_2,\,l=m_2+1,\hdots,p.$ 
\end{theorem}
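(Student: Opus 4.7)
The plan is to reduce the claim to an application of Proposition \ref{theo:stuctmain2}. First I would observe that $(X_c,\Lambda_c)$, as constructed, is a genuine \emph{real} deflating pair of $L(\lambda)$: for each $j$-block the complex deflating pair $\bigl([x_j^c,\bar x_j^c,\hat x_j^c,\bar{\hat x}_j^c],\,\diag(\lambda_j^c,\bar\lambda_j^c,-\bar\lambda_j^c,-\lambda_j^c)\bigr)$ is converted into its real form by the similarity $\diag(Z,Z)$ with $Z=\tfrac12\bmatrix{1 & -i\\ 1 & i}$, as described at the end of Section~2; the $k$-blocks use a single $Z$-similarity, and the $l$-blocks are already real. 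Distinctness of the replaced eigenvalues together with assumption $(a)$ handles the spectral condition, while $(b)$ provides the nonsingular Gramian $G=X_c^T JM X_c$. The unstructured update formulas then come straight from Proposition \ref{theo:stuctmain2} applied to the $T$-even pencil $JL(\lambda)\in \L_{2n}(T,-1,1)$, for \emph{any} $\hat M$.

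For the structured part, the claim reduces to verifying that the block-diagonal $\hat M$ given in the theorem places $\lambda\hat M+\hat K$ in $\L_p(T,-1,1)$, where $\hat K=G(\Lambda_c-\Lambda_a)-\hat M\Lambda_a$. By the final equivalent reformulation in Proposition \ref{theo:stuctmain2}, it suffices to check $(\mathrm{i})\ \hat M^T=-\hat M$ and $(\mathrm{ii})\ \bigl((\hat M+G)\Lambda_a\bigr)^T=(\hat M+G)\Lambda_a$. Point $(\mathrm{i})$ is a direct block-by-block calculation using $J_2^T=-J_2$.

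The bulk of the work is $(\mathrm{ii})$, and it requires pinning down the block structure of $G$ first. Since $JL(\lambda)\in\L_{2n}(T,-1,1)$ carries $\epsilon_1\epsilon_2=-1$, Proposition \ref{orthprops}$(iv)$ applied to the different $j,k,l$ groups (whose spectra are pairwise disjoint and satisfy $\sigma(\Lambda_j)\cap\sigma(-\Lambda_k^T)=\emptyset$ across groups) forces $G$ to be block diagonal conformally with $\Lambda_c$. Within a $j$-block I would compute the Gramian in the complex basis $V=[x_j^c,\bar x_j^c,\hat x_j^c,\bar{\hat x}_j^c]$: Corollary 4.2 kills every entry except those whose eigenvalue pair sums to zero, and conjugating by $\diag(Z,Z)$ together with the obvious complex-conjugation symmetries produces $G_j=\bmatrix{{\bf 0} & \alpha'I_2+\beta'J_2\\ -\alpha'I_2+\beta'J_2 & {\bf 0}}$ with real $\alpha',\beta'$; an analogous computation gives $G_k=\gamma_k J_2$ and $G_l=\gamma_l J_2$ with real $\gamma_k,\gamma_l$. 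Crucially, $G_j,G_k,G_l$ lie in the same structural families as $\hat M_j,\hat M_k,\hat M_l$.

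Once these block forms are in hand, $(\mathrm{ii})$ follows block-by-block. For the $j$-block, writing $\Lambda_{a,j}=\diag(\hat\Lambda_j^a,-(\hat\Lambda_j^a)^T)$ with $\hat\Lambda_j^a=pI_2+qJ_2$, symmetry of $(G_j+\hat M_j)\Lambda_{a,j}$ follows because $\mathrm{span}\{I_2,J_2\}$ is a commutative algebra (isomorphic to $\mathbb{C}$), so the $-(\hat\Lambda_j^a)^T$ on the second diagonal entry of $\Lambda_{a,j}$ combines with the sign pattern of $G_j+\hat M_j$ to force the $(1,2)$ and $(2,1)$ blocks of the product to be each other's transposes. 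For the $k$-block $(G_k+\hat M_k)\Lambda_{a,k}$ reduces to a real multiple of $J_2^2=-I_2$, and for the $l$-block to a real multiple of $J_2\,\diag(1,-1)=\bmatrix{0 & -1\\ -1 & 0}$; both are symmetric. Combining $(\mathrm{i})$ and $(\mathrm{ii})$ with Proposition \ref{theo:stuctmain2} yields the claim. The main obstacle is the determination of the $G_j$ block form: tracking which Gramian entries survive Corollary 4.2 and then carefully pushing the result through the $\diag(Z,Z)$ similarity is where errors are most likely; the remainder is mechanical.
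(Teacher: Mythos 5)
Your proposal is correct and follows the same route as the paper: establish that $G=X_c^TJMX_c$ is block diagonal with $G_j$, $G_k$, $G_l$ lying in the same structural families as $\hat M_j$, $\hat M_k$, $\hat M_l$, and then invoke Proposition \ref{theo:stuctmain2}. The paper's proof simply asserts the block form of $G$ and says the rest follows; you carry out explicitly the derivation of that form and the block-by-block verification of $\hat M^T=-\hat M$ and the symmetry of $(\hat M+G)\Lambda_a$, all of which checks out.
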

\proof
As the eigenvalues of $\Lambda_c$ are distinct so the matrix $G=X^T_cJMX_c$ is of the form  $G=\mbox{diag}(G_{1},\dots,G_{m_1},G_{m_1+1},\dots ,G_{m_2},\,G_{m_2+1},\dots ,G_{p}) $ where $G_{j}=\bmatrix{0_2& u_jI_2+v_jJ_2\\ -u_jI_2+v_jJ_2& 0_2},$ $G_{k}=v_kJ_2,\,G_{l}=v_lJ_2$ for some real numbers $u_j,\,v_j,\,v_k,\,v_l$, $j=1,\hdots,m_1,\,k=m_1+1,\hdots, m_2,\,l=m_2+1,\hdots,p.$ Rest of the proof follows from Proposition \ref{theo:stuctmain2}. 
\eproof

Another parametric updates $\Delta M, \Delta K$ which solves the problem {\bf (P1)} for $T$-SHH pencils can be represented as follows.

\begin{remark} \label{remk:Eig_TSHH}
If the assumptions of Theorem \ref{thm:Eig_TSHH} hold then $T$-skew-Hamiltonian update matrix $\Delta M=J^TU\hat MU^T$ and  $T$-Hamiltonian matrix is given by $\Delta K=J^TU\hat{K} U^T$ which solves the problem \textbf{(P1)}, where
\beano
\hat{M} &=& G(\Lambda_c-\Lambda_a)H_a \Lambda_a^T+Z_1(I_{2m_1+2p}-\Lambda_aH_a\Lambda_a^T)-Z_2H_a\Lambda_a^T, \\
\hat{K} &=& G(\Lambda_c-\Lambda_a)H_a-Z_1\Lambda_aH_a+Z_2(I_{2m_1+2p}-H_a)
\eeano
with $H_a=(\Lambda_a^T\Lambda_a+I_{2m_1+2p})^{-1},\,G=X_c^TJMX_c,\,U=JMX_cG^{-1},$ $$Z_i=\mbox{diag}(Z^{(i)}_{1},\dots,Z^{(i)}_{m_1},Z^{(i)}_{m_1+1},\dots ,Z^{(i)}_{m_2},\,Z^{(i)}_{m_2+1},\dots ,Z^{(i)}_{p}),i=1,2,$$ $Z^{(1)}_{j}=\bmatrix{\textbf{0}& \alpha_{j}I_2+\beta_{j} J_2\\-\alpha_{j}I_2+\beta_{j} J_2& \textbf{0}},\,Z^{(2)}_{j}=\bmatrix{\textbf{0}& u_{j}I_2+v_{j} J_2\\ u_{j}I_2-v_{j} J_2& \textbf{0}},\,Z^{(1)}_{k}=\beta_{k} J_2,\, Z^{(2)}_{k}=u_{k}I_2,\,Z^{(1)}_{l}=\beta_{l} J_2,\,Z^{(2)}_{l}=u_{l} \bmatrix{0& 1\\1& 0},$ and $\alpha_{j},\,\beta_{j},u_{j},\,v_{j},\,\beta_{k},\,u_{k},\, \beta_{l},\,u_{l}$ are arbitrary real numbers, $j=1, \hdots,m_1,\,k=m_1+1,\hdots,m_2,\,l=m_2+1,\hdots,p.$
\end{remark}

Now we apply the above results on a numerical example to examine the validity of the results. 
\begin{example}
Consider a $*$-SHH pencil $L(\lam)=\lam M+K$ with 
$$M=\bmatrix{-0.25455 + 0.95256i&   0.02934 + 0.05513i&   0.00000 - 1.83635i&   0.08681 - 1.45077i \\
   2.25023 - 0.01156i&   1.14852 - 1.53017i&  -0.08681 - 1.45077i&   0.00000 + 1.40120i \\
   0.00000 - 0.96582i&  -0.22366 - 0.46730i&  -0.25455 - 0.95256i &  2.25023 + 0.01156i \\
   0.22366 - 0.46730i&   0.00000 - 1.00248i&   0.02934 - 0.05513i&   1.14852 + 1.53017i},$$ $$K=\bmatrix{
3.02148 + 1.90489i&   1.10499 + 1.16245i&  -1.26366 + 0.00000i&   1.65942 + 0.71011i \\
   0.44232 - 1.07299i&   0.29350 - 0.24688i&   1.65942 - 0.71011i&  -0.19304 + 0.00000i \\
   1.30628 + 0.00000i &  -0.42739 + 0.75761i&  -3.02148 + 1.90489i&  -0.44232 - 1.07299i \\
  -0.42739 - 0.75761i&   0.52491 + 0.00000i&  -1.10499 + 1.16245i&  -0.29350 - 0.24688i}.$$

Let $\lam^c_1=-0.92332 - 0.75639i,\,\lam^c_2=-0.12114i$ and $\lam^a_1=-0.76954+0.53243i,\,\lam^a_2=-3.22147i.$ Suppose that we want to replace the set of eigenvalues $\{\lam^c_1,\,-\overline{\lam^c_1},\,\lam^c_2 \}$ of $L(\lam)$ by the desired set of eigenvalues $\{\lam^a_1,\,-\overline{\lam^a_1},\,\lam^a_2 \}$ respectively. Thus
$\Lambda_c=\mbox{diag}\left(\lam^c_1,\,-\overline{\lam^c_1},\,\lam^c_2 \right),\,\Lam_a=\mbox{diag}\left(\lam^a_1,\,-\overline{\lam^a_1},\,\lam^a_2\right)$ and \begin{center}$X_c=\bmatrix{
 1.00000 + 0.00000i&  -0.43182 + 0.23755i&  -0.20930 + 0.22721i \\
-0.32603 - 0.60175i&   1.00000 + 0.00000i&  -0.67852 - 0.58802i \\
 0.72475 + 0.50622i&  -0.01383 + 0.37218i&   0.21160 - 0.29125i \\
-0.20761 + 0.69892i&   0.09784 + 0.45636i&   1.00000 + 0.00000i}.$ \end{center} 

Then by remark \ref{remk:Eig_SHH}, choosing $Z_1=\bmatrix{0&   0.06022 + 0.19082i&   0 \\
  -0.06022 + 0.19082i&   0&   0 \\
   0&   0&   1.19827i},\\ Z_2=\bmatrix{0&  -0.50561 + 0.37741i&   0 \\
  -0.50561 - 0.37741i&   0&   0 \\
   0&  0&   1.45556}$ we obtain $$\triangle M= \bmatrix{
0.27615 + 0.21015i&  -0.64643 - 1.17676i&   0.00000 - 0.45391i&   0.95858 + 0.57857i \\
  -0.88139 - 0.13297i&  -1.84854 + 0.99750i&  -0.95858 + 0.57857i&   0.00000 - 2.19806i \\
  -0.00000 + 0.70112i&   0.64985 - 0.15198i&   0.27615 - 0.21015i&  -0.88139 + 0.13297i \\
  -0.64985 - 0.15198i&   0.00000 + 1.69525i&  -0.64643 + 1.17676i&  -1.84854 - 0.99750i
},$$ $$\triangle K=\bmatrix{
-0.63477 - 1.42656i&  -1.93590 - 0.08067i&  -2.43388 + 0.00000i&   0.04977 - 2.40635i \\
  -1.43606 + 0.85246i&   0.29333 + 1.96152i&   0.04977 + 2.40635i&  -2.93978 + 0.00000i \\
   0.86197 - 0.00000i&   0.63350 - 1.45810i&   0.63477 - 1.42656i&   1.43606 + 0.85246i \\
   0.63350 + 1.45810i&   1.46857 - 0.00000i&   1.93590 - 0.08067i&  -0.29333 + 1.96152i
 }.$$ 
On taking $\Lambda_f=4.51104i$ and $X_f=\bmatrix{
0.20548 + 0.72300i\\
  -0.52204 + 0.39798i \\
   1.00000 - 0.00000i \\
  -0.61073 + 0.21633i}$ we obtain
$\|(M+\triangle M)X_f \Lambda_f+(K+\triangle K)X_f \|_F=1.5519 \times 10^{-14},\,$ which shows that the unmeasured spectral data remain undisturbed. 

Hence we conclude that eigenvalues of the 
$*$-SHH pencil $L_{\triangle}(\lam) = \lam (M+\triangle M)+(K+\triangle K)$ are $\{\lam^a_1,\,-\overline{\lam^a_1},\,\lam^a_2 \}.$ Therefore eigenvalues of $L(\lam)$ are replaced by the desired eigenvalues with maintaining no spillover condition. 

\end{example}

{\bf Conclusion} Given a matrix pencil $L(\lam)=\lam M +K\in\C^{n\times n}[\lam],$ a matrix pair $(X,\Lambda)\in \C^{n\times p} \times \C^{p\times p}$ is said to be a deflating pair of $L(\lam)$ if $MX\Lambda+KX=0,$ $p< n.$ Two such deflating pairs $(X_1, \Lam_1)\in \C^{n\times p} \times \C^{p\times p}$ and $(X_2, \Lam_2)\in \C^{n\times (n-p)} \times \C^{(n-p)\times (n-p)}$ are called complementary if $[X_1 \,\, X_2]$ is invertible. Given the complementary deflating pairs $(X_c,\Lam_c)$ and $(X_f,\Lam_f)$ of a structured matrix pencil $L(\lam),$ and an another matrix pair $(X_a,\Lam_a)$ we determine computable expressions of structured and unstructured updates $\Delta M, \Delta K$ such that the updated matrix pencil $L_\Delta(\lam)=\lam (M+\Delta M) + (K+\Delta K)$ inherit $(X_a,\Lam_a), (X_f,\Lam_f)$ as complementary deflating pairs under some generic assumptions. When the matrices $\Lam_c, \Lam_f$ and $\Lam_a$ are diagonal matrices then the above problem is called the model updating problem with no spillover, in which the diagonal entries of $\Lam_a$ and $\Lam_f$ are the measured and unmeasured eigenvalues of a undamped finite element model associated with the pencil $L(\lam).$ However, in general $(X_f,\Lam_f)$ is not known and with this assumption we derive explicit parametric expression of unstructured and structured updates for a variety of structured matrix pencils which include symmetric, Hermitian, $\st$-even, $\st$-odd and $\st$-skew-Hamiltonian/Hamiltonian matrix pencils. We examine the validity of the theoretical results by considering several numerical examples. We plan to extend the proposed framework to finite element quadratic model updating problem with no spillover. \\\\

\noin{\bf Acknowledgment} Biswa Nath Datta acknowledges IIT Kharagpur for providing necessary support for his several visits to IIT Kharagpur. Michael Karow acknowledges IIT Kharagpur for the support through SGRIP grant and GIAN course which made his visit to IIT Kharagpur possible. 


\end{document}